\theoremstyle{plain}% Theorem-like structures
\newtheorem{theorem}{Theorem}[section]
\newtheorem{corollary}[theorem]{Corollary}
\newtheorem{lemma}[theorem]{Lemma}
\theoremstyle{definition}
\theoremstyle{remark}
\begin{document}

%\jvol{00} \jnum{00} \jyear{2015} \jmonth{February}

%\articletype{GUIDE}

\title{Nonuniform Berry-Esseen bounds for martingales with applications to statistical estimation    \\  \footnotesize{\textit{Dedicated to Paul Doukhan on his sixtieth birthday}} }

\author{
\name{X. Fan\textsuperscript{a,b,}$^\ast$,
I. Grama\textsuperscript{c}
and Q. Liu\textsuperscript{c,}$^{\ast}$\thanks{$^\ast$Fan and Liu are both corresponding authors. }\thanks{$^\ast$Email: fanxiequan@hotmail.com (X. Fan), quansheng.liu@univ-ubs.fr (Q. Liu)}}
\affil{\textsuperscript{a}Center for Applied Mathematics, Tianjin University, 300072 Tianjin,  China. \\
 \textsuperscript{b}Regularity Team, Inria, France; \\
\textsuperscript{c}Universit\'{e} de Bretagne-Sud, LMBA, UMR CNRS 6205,
 Campus de Tohannic, 56017 Vannes, France
}
\received{v5.0 released February 2015}
}

\maketitle

\begin{abstract}
We establish nonuniform Berry-Esseen bounds for martingales under the conditional Bernstein condition. These bounds imply
Cram\'{e}r type large deviations for moderate $x$'s, and are of exponential decay rate as de la Pe\~{n}a's inequality when $x\rightarrow \infty$. Statistical applications associated with linear regressions and   self-normalized large deviations   are also provided.
\end{abstract}

\begin{keywords}
Nonuniform Berry-Esseen bound;
Cram\'{e}r large deviations;
Exponential inequality;
Linear regressions;
Self-normalized large deviations
\end{keywords}

\begin{classcode}  60G42;
 60F05;
 60F10;
 60E15;
 62E20;
62J05
\end{classcode}

\section{Introduction}
\textcolor{black}{Let $n\geq 1$ be an integer and} $(\xi_{i})_{i=1,...,n}$ be a sequence of independent
random variables with zero means and finite variances \textcolor{black}{on a
 probability space $(\Omega ,\mathcal{F},\mathbf{P})$}.
 Let $S_n=\sum_{i=1}^n\xi_i.$
Without loss of generality, we assume that $\mathbf{E}[S_n^2]=1,$
\textcolor{black}{where $\mathbf{E}$ is the expectation corresponding to $\mathbf{P}.$}
%otherwise we consider the normalized random variable sequence $(\xi_{i}/\sqrt{\mathbf{E} [S_n^2]}\,)_{i=1,...,n}.$
%Suppose that $(\xi_{i})_{i=1,...,n}$ are identically distributed and satisfy $\mathbf{E}[ |\xi_1|^{3}] < \infty.$
Nonuniform normal approximation bounds have been first obtained by Esseen (1945) for
identically distributed random variables with finite third moments. These were improved
to $C\, n\, \mathbf{E} [|\xi_1|^{3} ] /(1+|x|^3)$ by Nagaev (1965), where, throughout this paper, $C$ stands for an \textcolor{black}{absolute}  constant with possibly different values in different places.
Bikelis (1966) generalized Nagaev's result to the sums of non-identically distributed random variables with moments of order larger than $2$.
Suppose that there exists a constant $\delta \in (0, 1]$ such that $\mathbf{E} [|\xi_i|^{2+\delta} ]< \infty$ for all $i \in [1,n].$
\textcolor{black}{Bikelis (1966) (cf.\ also Petrov (1975), p.132)} has established the following nonuniform Berry-Esseen bound: for all $x \in \mathbf{R},$
\textcolor{black}{\begin{eqnarray}
\Big|\mathbf{P}(S_n\leq x) - \Phi(x)\Big| \leq  C\,\frac{\sum_{i=1}^n\mathbf{E}[|\xi_i|^{2+\delta}]}{(1+|x|)^{2+\delta}},\label{bei}
\end{eqnarray}
where} $\Phi(x)$ is the standard normal distribution function.
 Note that Bikelis'  bound implies the Berry-Esseen bound
\begin{eqnarray}
D(S_n)  := \sup_{x \in \mathbf{R}} \Big|\mathbf{P}(S_n\leq x) - \Phi(x) \Big| \leq C\ \sum_{i=1}^n\mathbf{E}[|\xi_i|^{2+\delta}],
\end{eqnarray}
which is known to be optimal.
On the other hand
the  bound (\ref{bei}) decays in the best possible polynomial rate $1/|x|^{2+\delta}$ when $|x|\rightarrow \infty.$
%It is known that this polynomial decaying rate is the best possible under the stated condition.
For random variables without assuming the existence of moments of order lager than $2$,
\mbox{Bikelis} type bound has been established by Chen and Shao (2001) via Stein's
method: they showed that, for all $x \in \mathbf{R},$
\begin{eqnarray}\label{csi}
&&\Big|\mathbf{P}(S_n\leq x) - \Phi(x) \Big|  \ \leq \ C \sum_{i=1}^n \bigg( \frac{\mathbf{E}[\xi_i^2\mathbf{1}_{\{|\xi_i|>1+|x|\}}]}{(1+|x|)^2} + \frac{\mathbf{E}[|\xi_i|^3\mathbf{1}_{ \{|\xi_i| \leq 1+|x| \}}] }{(1+|x|)^3}\bigg).
\end{eqnarray}
\textcolor{black}{It is easy to see that (\ref{csi}) implies (\ref{bei}), since
$\mathbf{E}[\xi_i^2\mathbf{1}_{\{|\xi_i|>1+|x|\}}]\leq \mathbf{E}[|\xi_i|^{2+\delta}] / (1+|x|)^\delta$
and
$\mathbf{E}[|\xi_i|^3\mathbf{1}_{\{|\xi_i|\leq 1+|x|\}}]   \leq \mathbf{E}[|\xi_i|^{2+\delta}]   (1+|x|)^{1-\delta}$
for $\delta \in (0,1].$}

Now we consider the case of martingales. The  generalization of Bikelis' bound (\ref{bei}) can be found in   Hall and Heyde (1980, 1981) and Haeusler and Joos (1988b). Assume that $(\xi_{i}, \mathcal{F}_{i})_{i=1,...,n}$ is a sequence of martingale differences defined on some probability space
$(\Omega ,\mathcal{F},\mathbf{P})$. Here $\xi_{i}$'s may depend on $n$.  Denote by $\left\langle S\right\rangle_n=\sum_{i=1}^n\mathbf{E}[\xi _i^2|\mathcal{F} _{i-1}]$ the quadratic characteristic of $S_n$. Haeusler and Joos  proved that if  $\mathbf{E} [|\xi_i|^{2+\delta} ]< \infty$ for some $\delta > 0$ and for all $i \in [1,n],$ then there exists a constant $C_\delta$, depending only on $\delta$, such that, for all $x \in \mathbf{R},$
\begin{eqnarray}
  \Big|\mathbf{P}(S_n\leq x) - \Phi(x) \Big|  \leq \ C_\delta \ \bigg( \sum_{i=1}^n\mathbf{E} [ |\xi_i|^{2+\delta}] +  \mathbf{E} [|\langle S\rangle_{n}-1|^{1+\delta/2}] \bigg)^{1/(3+\delta)} \frac{1}{1+|x|^{2+\delta}}; \label{cdsi}
\end{eqnarray}
see also  Hall and Heyde (1980, 1981) with the larger factor $\frac{1}{1+|x|^{4(1+\delta/2)^2/(3+\delta)}}$ replacing $\frac{1}{1+|x|^{2+\delta}}$ of (\ref{cdsi}).
Similar to Bikelis' bound, inequality (\ref{cdsi}) implies the following Berry-Essen bound for martingales:
\begin{eqnarray}\label{cdsdv}
D(S_n) \ \leq \ C_\delta \ \bigg( \sum_{i=1}^n\mathbf{E} [ |\xi_i|^{2+\delta} ]+  \mathbf{E} [|\langle S\rangle_{n}-1|^{1+\delta/2} ]\bigg)^{1/(3+\delta)}.
\end{eqnarray}
Moreover, Haeusler (1988a) showed that the Berry-Esseen bound (\ref{cdsdv}) is the best possible under the stated conditions.
The bound (\ref{cdsi}) also decays with the best possible polynomial rate $1/|x|^{2+\delta}$ when $|x|\rightarrow \infty.$

Apart the nonuniform Berry-Esseen bounds with polynomially decaying rate there are few nonuniform bounds with exponentially decaying rate. The only reference we aware of is Joss (1991) for bounded martingale differences. We refer also to Ra\v{c}kauskas (1990,1995), Grama (1997) and Grama and Haeusler (2000, 2006) where moderate deviations have been obtained.

Assume the following martingale version of Bernstein's conditions,
where it is assumed that the martingale differences have moments of all orders:
\begin{description}
\item[(A1)]  There exists some positive number  $\epsilon \in (0, 1/2]$ such that
\[
\Big|\mathbf{E}[\xi_{i}^{k}  | \mathcal{F}_{i-1}] \Big| \leq \frac12 k!\epsilon^{k-2} \mathbf{E}[\xi_{i}^2 | \mathcal{F}_{i-1}] \ \ \ a.s.  \ \textrm{for all}\ k\geq 2\   \textrm{and all} \ 1\leq i\leq n.
\]
\item[(A2)] There exists a nonnegative  number $\delta \in [0,1]$ such that
\[
 |\langle S\rangle_{n}-1| \leq \delta^2  \ \  a.s. \ \ \  \ \ \ \ \ \ \ \ \ \ \ \ \ \ \ \ \ \ \ \ \ \ \ \ \ \ \ \ \ \ \ \ \ \ \ \ \ \ \ \ \ \ \ \ \ \ \ \ \ \
\]
\end{description}
Here $\epsilon$ and $\delta$ usually depend on $n$ such that $\epsilon  \rightarrow 0, \delta \rightarrow 0$ as $n\rightarrow \infty$.
%In the sequel we assume that  $ \epsilon \in (0, 1/2]$ and $ \delta \in [0,1]$.
In particular, when $(\xi _i)_{i=1,...,n}$ are independent, condition (A2) is satisfied with $\delta=0$ for normalized $S_n$ and condition (A1) is known as the Bernstein condition. If $(\xi _i)_{i=1,...,n}$ are also identically distributed, then (A1) holds with  $\epsilon=\frac {C} {\sqrt{n}} $ as $n\rightarrow \infty.$

Under condition (A1), de la Pe\~{n}a (1999) has obtained the following martingale version of Bennett's inequality (1962) (see also Bernstein (1927)),  for all $x,v >0$,
\begin{eqnarray}
\mathbf{P}(S_n> x, \langle S\rangle_{n}\leq v^2 ) &\leq&  \exp\left\{-\frac{x^2}{ v^2+\sqrt{1+2 x\epsilon/v^2   } +  x \epsilon  }\right\}  \label{fsv}  \\
&\leq&  \exp\left\{-\frac{x^2}{ 2( v^2 + x\epsilon ) }\right\}. \label{scv}
\end{eqnarray}
\textcolor{black}{Recent improvements of \eqref{fsv} are given in Theorem 3.14 of Bercu, Deylon and Rio (2015)
and Fan, Grama and Liu (2015b).}
We refer to Shorack and Wellner (1986) and van de Geer (1995) for  inequality (\ref{scv}).
Moreover, if, in addition, condition (A2) holds, then de la Pe\~{n}a's inequality (\ref{fsv}) implies that, for all $x\geq 0,$
\begin{eqnarray}
\mathbf{P}(S_n> x ) \leq \exp\left\{-\frac{\, \widehat{x}^2}{ 2 }\right\}, \label{fin2}
\end{eqnarray}
where
\begin{eqnarray}\label{fbxr000}
\widehat{x}   = \frac{2x/\sqrt{1+\delta^2}}{1+\sqrt{1+2x\epsilon/(1+\delta^2)}}.
\end{eqnarray}
Since $\widehat{x}\rightarrow x$ as $\max\{\epsilon,
\delta\}\rightarrow 0,$ the bound (\ref{fin2}) is exponentially decaying with rate $\exp\{-x^2/2\}$ when $\max\{\epsilon, \delta\}\rightarrow 0$.
Thus, the  bound (\ref{fin2}) is tight.
By considering the martingale differences $(-\xi_{i}, \mathcal{F}_{i})_{i=1,...,n}$,  the bound (\ref{fin2}) holds equally on tail probabilities
$\mathbf{P}(S_n< - x),\ x\geq 0.$
Due to this fact, we expect to establish a nonuniform Berry-Esseen bound with exponentially decaying rate as in (\ref{fin2}) when $|x|\rightarrow \infty$.

Our main result is the following  nonuniform Berry-Esseen bound for martingales.
Assume   conditions (A1) and (A2). Then, for all $x \geq 0$,
\begin{eqnarray}\label{fgs1}
 && \left|\frac{}{} \mathbf{P}(S_n \leq x )-  \Phi\left( x\right) \right|     \leq   C\,\Big( 1+ x^2 \Big)\Big( \epsilon|\log \epsilon| +\frac{\delta }{1 +|x| }\Big) \exp\left\{-\frac{\widehat{x}^2}{2}\right\} ,
\end{eqnarray}
where $\widehat{x}$ is defined by (\ref{fbxr000}).
To prove (\ref{fgs1}), we need the following strengthened version  of  de la Pe\~{n}a's inequality (\ref{fin2}):
for all $  x\geq 0 ,$
\begin{eqnarray}
  \ \ \ \ \ \ \ \ \mathbf{P}(S_n>x )  \, \leq \, \left( \frac{}{}1- \Phi\left( \widehat{x} \right)\right)
  \Big[\frac{}{} 1+ C \Big(1+ \widehat{x} \Big)  \Big(  \lambda ^2 \epsilon +  \lambda  \delta^2 + \epsilon \left| \log  \epsilon
 \right| +   \delta  \Big) \Big],\label{iepx1}
\end{eqnarray}
with $\lambda \in [0, \  \epsilon^{-1})$, $ \lambda  = x + O\big(x^2\epsilon+ x\delta \big) ,  \ \ x^2\epsilon+ x\delta \rightarrow 0;$
see Theorem \ref{th2.2} for details.  We will show that bound  (\ref{iepx1})
strengthens de la Pe\~{n}a's inequality (\ref{fin2}) by adding a factor of  type $\frac{1}{1+ x}$.

Of course, condition (A2) is very restrictive.
Without condition (A2), under solely condition (A1), with a method of Bolthausen (1982), we establish the following
nonuniform Berry-Esseen bound under condition  (A1): for all $x \in \mathbf{R},$
\begin{eqnarray}
\left|\frac{}{} \mathbf{P}(S_n \leq x )-  \Phi\left( x\right) \right|
  &\leq&   C \, \bigg( \left(  1+ x^2 \right)\epsilon|\log \epsilon| \exp\left\{-\frac{ \breve{x} ^2}{2}\right\}  \nonumber \\
  & & \ \ \ \ \ \ \ \ + \ \Big(\mathbf{E}|\! \left\langle S\right\rangle_n -1| + \epsilon^2 \Big)^{1/3}  \exp\left\{-\frac{x^2}{6}\right\} \bigg),\ \ \
\label{fgs2}
\end{eqnarray}
where
\begin{eqnarray}\label{tgn35}
\breve{x}  = \frac{2|x| }{1+\sqrt{1+2|x|\epsilon }}.
\end{eqnarray}
This result has an exponential decaying rate in $x$, compablack to the polynomial decaying rate in the nonuniform
Berry-Esseen bounds of Haeusler and Joos (1988b) and  Joos (1991). % under more restrictive condition  (A1).
%Note that $\breve{x}$ coincides with $\widehat{x}$ defined by (\ref{fbxr}) with $\delta=0.$

The bounds (\ref{fgs1}) and (\ref{fgs2}) are closely related to the results of Fan et al.\ (2013) and Grama and  Haeusler (2000).   However,  we complete on these results in three aspects.
First, we establish nonuniform Berry-Esseen bounds, which imply the Cram\'{e}r large deviations of    Fan et al.\ (2013) and Grama and  Haeusler (2000) in the normal range $0\leq x =o(\epsilon^{-1/3})$.
Second, we relax condition (A2) of Fan et al.\ (2013), replacing it by boundedness of the moment $\mathbf{E}|\! \left\langle S\right\rangle_n -1|$.
Third, our bounds hold  for all $x \in \mathbf{R},$ compablack with the range $0\leq x =o(\epsilon^{-1})$ established in Fan et al.\ (2013).

The paper is organized as follows. Our  results are stated and discussed in Section \ref{sec2}.  Some applications
 to linear regressions  and    self-normalized large deviations are presented in Section \ref{sec3}. The proofs of the results are deferblack to Sections \ref{sec4} and \ref{sec5}.

%%%%%%%%%%%%%%%%%%%%%%%%%%%%%%%%%%%%%%%%%%%%%%%%%%%%%%%%%%%%%%%%%%%%%%%%%%%%%%%%%%%%%%%%%%%%%%%%%%%%%%%%%%%%%%%%%%%%%%%%%%%%%%%%%%%%%%%%%%%%%%%%%%%%%%
\section{Main Results}\label{sec2}
Assume that we are given a sequence of martingale differences $(\xi _i,\mathcal{F}_i)_{i=0,...,n}, $ defined on some
 probability space $(\Omega ,\mathcal{F},\mathbf{P})$,  where $\xi
_0=0 $ and $\{\emptyset, \Omega\}=\mathcal{F}_0\subseteq ...\subseteq \mathcal{F}_n\subseteq
\mathcal{F}$ are increasing $\sigma$-fields.
\textcolor{black}{Consider the martingale $S=(S_k,\mathcal{F}_k)_{k=0,\dots,n}$, where}
\begin{equation}\label{matingal}
S_{0}=0,\ \ \ \ \ \ \ S_k=\sum_{i=1}^k\xi _i,\quad k=1,...,n.
\end{equation}
\textcolor{black}{Let $\left\langle S\right\rangle $ be its pblackictable quadratic variation:}
\begin{equation}\label{quad}
\left\langle S\right\rangle _0=0,\ \ \ \ \ \ \ \ \left\langle S\right\rangle _k=\sum_{i=1}^k\mathbf{E}[\xi _i^2|\mathcal{F}
_{i-1}],\quad k=1,\dots,n.
\end{equation}
%\textcolor{black}{where $\mathbf{E}$ is the expectation corresponding to $\mathbf{P}.$}
Our  main result is the following nonuniform Berry-Esseen bound for martingales.
\begin{theorem}\label{th2.1}
Assume conditions (A1) and (A2). Then, for all $x \in \mathbf{R}$,
\begin{eqnarray}\label{sdfgs}
&& \left|\frac{}{} \mathbf{P}(S_n \leq x )-  \Phi\left( x\right) \right| \, \leq \, C\,\Big(  1+ x^2 \Big)\Big(  \epsilon|\log \epsilon| +\frac{\delta }{1 +|x| } \Big) \exp\left\{-\frac{\widehat{x}^2}{2}\right\} ,
\end{eqnarray}
\textcolor{black}{where
\begin{eqnarray}\label{fbxr}
\widehat{x}   = \frac{2|x|/\sqrt{1+\delta^2}}{1+\sqrt{1+2|x|\epsilon/(1+\delta^2)}}.
\end{eqnarray}}
% $\widehat{x}$ is defined by (\ref{fbxr}).
\end{theorem}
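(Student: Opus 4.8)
The plan is to derive the two-sided estimate (\ref{sdfgs}) from the sharpened de la Pe\~na inequality (\ref{iepx1}) of Theorem \ref{th2.2}, the crude exponential bound (\ref{fin2}), and elementary comparisons of Gaussian tails; throughout write $\overline\Phi=1-\Phi$ and let $\phi$ be the standard normal density. First I would reduce to $x\ge 0$: the sequence $(-\xi_i,\mathcal F_i)$ again satisfies (A1)--(A2) with the same $\epsilon,\delta$, and $\widehat x$ depends on $x$ only through $|x|$, so for $x<0$ the exact identity $\mathbf P(S_n\le x)-\Phi(x)=\mathbf P(-S_n\ge|x|)-\overline\Phi(|x|)$ turns the problem into the right-tail estimate for $-S_n$ at the nonnegative point $|x|$, covered by the case $x\ge0$. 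For $x\ge 0$ I would then split through the intermediate tail $\overline\Phi(\widehat x)$,
\begin{equation*}
\mathbf P(S_n\le x)-\Phi(x)=\bigl[\overline\Phi(x)-\overline\Phi(\widehat x)\bigr]+\bigl[\overline\Phi(\widehat x)-\mathbf P(S_n>x)\bigr],
\end{equation*}
and estimate the two brackets separately.

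The first bracket is purely analytic and universally valid. From the explicit form (\ref{fbxr}), using $\sqrt{1+a}\le 1+a/2$ and $1-(1+\delta^2)^{-1/2}\le\delta^2/2$, one gets $0\le x-\widehat x\le\frac12(x^2\epsilon+x\delta^2)$ for every $x\ge 0$; since $\phi$ decreases on $[0,\infty)$ and $\widehat x\le x$,
\begin{equation*}
\bigl|\overline\Phi(x)-\overline\Phi(\widehat x)\bigr|=\int_{\widehat x}^{x}\phi(t)\,dt\le(x-\widehat x)\,\phi(\widehat x)\le C(x^2\epsilon+x\delta^2)\,e^{-\widehat x^2/2}.
\end{equation*}
With $\delta^2\le\delta$ and the elementary inequalities $x^2\epsilon\le C(1+x^2)\epsilon|\log\epsilon|$ and $x\delta^2\le C(1+x^2)\delta/(1+x)$, this bracket is already bounded by the right-hand side of (\ref{sdfgs}).

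For the second bracket I would split according to the size of $R(x):=(1+x^2)\bigl(\epsilon|\log\epsilon|+\delta/(1+x)\bigr)$. On the large range $R(x)\ge c_0$ the bound is cheap: (\ref{fin2}) gives $\mathbf P(S_n>x)\le e^{-\widehat x^2/2}$ and the standard tail bound gives $\overline\Phi(\widehat x)\le\frac12 e^{-\widehat x^2/2}$, so the bracket is at most $e^{-\widehat x^2/2}\le c_0^{-1}R(x)e^{-\widehat x^2/2}$. On the complementary moderate range $R(x)<c_0$ the parameters $x^2\epsilon,\,x\delta$ are small, so the tilting parameter of (\ref{iepx1}) obeys $\lambda\in[0,\epsilon^{-1})$ and $\lambda\asymp x$; feeding the two-sided relative bound of Theorem \ref{th2.2} (whose lower half is the delicate part discussed below), written $\mathbf P(S_n>x)=\overline\Phi(\widehat x)(1+O(E))$ with $E=C(1+\widehat x)(\lambda^2\epsilon+\lambda\delta^2+\epsilon|\log\epsilon|+\delta)$, and the Mills-ratio bound $\overline\Phi(\widehat x)\le C(1+\widehat x)^{-1}e^{-\widehat x^2/2}$, I would obtain $|\overline\Phi(\widehat x)-\mathbf P(S_n>x)|=\overline\Phi(\widehat x)\,O(E)\le CR(x)e^{-\widehat x^2/2}$, again using $\lambda\asymp x$ and $\delta^2\le\delta$. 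Summing the two brackets gives (\ref{sdfgs}).

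The main obstacle lives in the second bracket on the moderate range. The upper half of the relative bound is (\ref{iepx1}), but the matching \emph{lower} bound for $\mathbf P(S_n>x)$—indispensable whenever $\mathbf P(S_n>x)<\overline\Phi(\widehat x)$—requires a change of probability through the conjugate (exponentially tilted) martingale measure together with careful tracking of the saddle parameter $\lambda=x+O(x^2\epsilon+x\delta)$, and it is precisely this analysis that produces both the factor $\epsilon|\log\epsilon|$ and the correction $\delta/(1+x)$. At the present level the remaining care is bookkeeping: one must verify that the single factor $R(x)$ dominates simultaneously the Gaussian-comparison error and the tilting error $E$ uniformly in $x\ge 0$, and that the threshold defining the large/moderate split is compatible with the smallness $x^2\epsilon+x\delta\to0$ required for the expansion of $\lambda$.
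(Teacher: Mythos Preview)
Your plan matches the paper's proof almost exactly: reduce to $x\ge 0$ by passing to $(-\xi_i)$, split $\mathbf P(S_n\le x)-\Phi(x)$ through $\overline\Phi(\widehat x)$, control the Gaussian bracket via the elementary bound $0\le x-\widehat x\le C(x^2\epsilon+x\delta^2)$, and on the moderate range convert a relative error for $\mathbf P(S_n>x)/\overline\Phi(\widehat x)$ into the desired absolute bound using the Mills ratio, while on the large range use the crude exponential bound. The paper's explicit split point is $x=\min\{\epsilon^{-1/3},\delta^{-1}\}$, which is equivalent to your threshold $R(x)\ge c_0$.

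One clarification is worth flagging. Theorem~\ref{th2.2} as stated and proved in the paper is \emph{one-sided}: it gives only the upper bound (\ref{iepx1}). The ``lower half'' you invoke is not part of Theorem~\ref{th2.2}; the paper obtains the needed lower bound on $\mathbf P(S_n>x)$ in the moderate range by citing a separate Cram\'er expansion (Corollary~2.1 of \cite{FGL13}), which is itself proved by the conjugate-measure method you sketch. So your proposal is correct, but you should either cite that result or carry out the tilting argument for the lower tail independently rather than attribute it to Theorem~\ref{th2.2}.
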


Let us give some  comments on the main result.
\begin{enumerate}
\item When  $(\xi _i)_{i=1,...,n}$
is a sequence of independent random variables, it is possible to improve the factor $  \epsilon|\log \epsilon| +\frac{\delta }{1 +|x| } $ in (\ref{sdfgs})  to  $\epsilon.$    However, this task is beyond the scope of this paper.
We refer to \cite{FGL14} for related bounds.

\item The exponential factor  $\exp\{- \widehat{x}^2/2 \}$ in (\ref{sdfgs}) has the same exponential decay rate as de la Pe\~{n}a's bound (\ref{fin2}) for all $x.$  For moderate $x$, this  exponential factor  has the exponentially decaying rate $\exp\{-x^2/2\}$ as $\max\{\epsilon, \delta\}\rightarrow 0$.
When the martingale differences are bounded, Joos (1991)  also established two nonuniform Berry-Esseen bounds for martingales with exponential decay rates. However, the  exponential decay rate  in Joos (1991) is much slower than that  of  (\ref{sdfgs}).
%He do not have the exponential decay rate $\exp\{-x^2/2\}$ as (\ref{sdfgs}) when $\max\{\epsilon, \delta\}\rightarrow 0$.

\item Inequality (\ref{sdfgs})  implies the following Cram\'{e}r large deviation expansion in the normal range: for all $0 \leq x  \leq \min\{\epsilon^{-1/3}, \delta^{-1} \},$
\begin{equation}\label{crd12}
\frac{\mathbf{P}(S_n > x)}{ 1- \Phi \left( x\right)}=  1 + \theta C\,\Big( 1+ x^3 \Big)\Big(  \epsilon|\log \epsilon| +\frac{\delta }{1 +x   } \Big),
\end{equation}
where $|\theta|\leq 1$.    Indeed, since $
0\leq x-\widehat{x}=O\Big(x^2\epsilon+ |x|\delta \Big), \   x^2\epsilon+ |x|\delta \rightarrow 0,$
and
\begin{eqnarray}
\ \ \ \ \ \  \frac{1}{\sqrt{2\pi}(1+x)}   \leq \Big( 1-\Phi \left( x\right) \Big) \exp\left\{ \frac{x^2}{2}\right\} \leq \frac{1}{\sqrt{\pi}(1+x)} \label{mc}
\end{eqnarray}
for all $x\geq0,$
we easily obtain (\ref{crd12}) from (\ref{sdfgs}). Some earlier results of type (\ref{crd12})  have been established  by Bose (1986a, 1986b), Ra\v{c}kauskas (1990, 1995)  and Grama and Haeusler (2000) for martingales with bounded differences; see also    Fan et al.\ (2013) under conditions (A1) and (A2). Notice that the factor $1+ x^3$ in  (\ref{crd12})
is the best possible. Thus, the factor $1+ x^2$ in (\ref{sdfgs}) also cannot be improved to a smaller one.

\item When $|\xi_{i}|\leq \epsilon$ and  condition  (A2) holds,  Bolthausen (1982)  proved that
\begin{equation}\label{f31}
D(S_n)  := \sup_{x \in \mathbf{R}} \Big|\mathbf{P}(S_n\leq x) - \Phi(x) \Big| \, \leq  \,  C  \,\Big(\epsilon^3 n  \log n  +\delta \Big).
\end{equation}
As pointed out by Bolthausen (1982), the  convergence rate in (\ref{f31}) is sharp in the sense that there exists a sequence of  bounded martingale differences $|\xi_i|\leq C/\sqrt{n}$ satisfying  $\langle S\rangle_{n}=1$ a.s.\ such that
\begin{equation}
\limsup_{n\rightarrow \infty} \sqrt{n}\  (\log n)^{-1} D(S_n)  \, >  \, 0.
\label{optim001}
\end{equation}
The factor $\log n$ in the convergence rate is the major difference between the Berry-Esseen bounds for
 martingale difference arrays (under suitable conditions) and for i.i.d.\ sequences, where the Berry-Esseen bounds are of order $ 1/\sqrt{n}$.
It is known that under certain stronger condition (for instance $\mathbf{E}[\xi _i^2|\mathcal{F}
_{i-1}]=1/n,$  $\mathbf{E}[\xi _i^3|\mathcal{F}
_{i-1}]=C_1/n^{3/2}$ and $\mathbf{E}[|\xi _i|^{3+\delta}|\mathcal{F}
_{i-1}]\leq C_2/n^{(3+\delta)/2}, \delta> 0,$ a.s.\ for all $1\leq i \leq n$),  an uniform Berry-Esseen bound of
order $1/\sqrt{n}$ for  martingale difference arrays is possible; we refer to Renz (1996) (see also Bolthausen (1982)).

\ \ \ \ It is easy to see that inequality (\ref{sdfgs}) implies the following Berry-Esseen bound
\begin{equation}\label{brmti}
D(S_n)    \, \leq \,  C\,\Big( \epsilon \left| \log  \epsilon \right|+\delta \Big).
\end{equation}
For martingales with bounded differences (\ref{brmti}) has been established earlier in Grama (1987a,b, 1988).
Note that (\ref{brmti}) implies Bolthausen's inequality (\ref{f31}) under the less restrictive condition (A1).
Indeed, by condition (A2), we have $3/4 \leq \langle X \rangle_n \leq n \epsilon ^2$ a.s. and then $\epsilon \geq \sqrt{3/(4n)}$. For $\epsilon \leq 1/2$, it is easy to see that $\epsilon^3 n   \log n  \geq 3\,\epsilon|\log\epsilon | /4$.
Thus (\ref{brmti}) implies (\ref{f31}).
Note that the bound in (\ref{f31}) may converge to infinity while that in (\ref{brmti}) converges to $0$ as $\epsilon, \delta \to 0$ and $n \to \infty.$
 For instance, if $\epsilon$ is of the order $n^{-1/3}$ and $\delta=o(1)$ as $n \to \infty$, then it is obvious that $\epsilon \left| \log  \epsilon \right|=O(n^{-1/3} \log n  )$ while $\epsilon^3 n  \log n  \geq \log n.$ Thus inequality (\ref{brmti}) strengthens  Bolthausen's inequality (\ref{f31}).

\item When  condition (A2) fails,  the optimal Berry-Esseen bounds for martingales have been obtained by several authors; we refer to  Bolthausen (1982), Haeusler (1988a),  Grama (1988, 1993) and Mourrat (2013). In these papers, the authors assume that the random variable $\langle S\rangle_{n}-1$ has finite moments.
\end{enumerate}

Of course, condition (A2) in our theorem may be very restrictive.
Using  the  method from Bolthausen (1982), we  deduce from (2.3) the following nonuniform Berry-Esseen bound where the condition (A2) is relaxed.

\begin{corollary}\label{co2.1}
Assume condition  (A1). Then, for all $x \in \mathbf{R},$
\begin{eqnarray} \label{thonineqs}
\ \ \ \ \ \Big| \mathbf{P}(S_n \leq x )-  \Phi\left( x\right) \Big|  \!\!  & \leq & \!\!  C \,  \bigg(\! \left(  1+ x^2 \right)\epsilon \, |\log \epsilon| \exp\left\{-\frac{ \breve{x} ^2}{2}\right\} \\
& &\ \ \ \ \  + \ \Big( \mathbf{E} |\! \left\langle S\right\rangle_n -1| + \epsilon^2 \Big)^{1/3}  \exp\left\{-\frac{x^2}{6}\right\} \bigg),\nonumber
\end{eqnarray}
where $ \breve{x} $ is defined by (\ref{tgn35}).
In particular,
\begin{eqnarray} \label{itmab}
\sup_{x \in \mathbf{R}} \Big|\mathbf{P}(S_n\leq x) - \Phi(x) \Big| \, \leq \, C \,  \bigg( \Big(\mathbf{E} |\! \left\langle S\right\rangle_n -1| \Big)^{1/3} + \epsilon^{2/3} \bigg) .
\end{eqnarray}
\end{corollary}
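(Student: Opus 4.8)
The plan is to deduce Corollary~\ref{co2.1} from Theorem~\ref{th2.1} by the quadratic-variation truncation device of Bolthausen (1982): introduce a free parameter $\eta\in(0,1]$, replace $S$ by a modified martingale $\widetilde S$ whose predictable quadratic variation is forced into $[1-\eta^2,1+\eta^2]$ so that (A2) holds with $\delta=\eta$, apply the bound \eqref{sdfgs} to $\widetilde S$, and pay for the modification through the probability that $\langle S\rangle_n$ is far from $1$. Since (A1) is invariant under $\xi_i\mapsto-\xi_i$ while $\langle S\rangle$ is unchanged and this reflection sends $x$ to $-x$, it suffices to establish \eqref{thonineqs} for $x\ge0$ and then read off \eqref{itmab}.

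For the construction I would stop the martingale at $\tau=\max\{k\le n:\langle S\rangle_k\le 1+\eta^2\}$ (legitimate because $k\mapsto\langle S\rangle_k$ is nondecreasing) and then append a block of $O(\epsilon^{-2})$ small conditionally Gaussian increments, each of conditional variance at most $\epsilon^2$, so that the terminal predictable quadratic variation lands in $[1-\eta^2,1+\eta^2]$ (appending when $\langle S\rangle_\tau\le 1$). The appended increments obey (A1) with the same $\epsilon$, so $\widetilde S$ satisfies (A1) and (A2) with $\delta=\eta$; moreover on the event $G=\{|\langle S\rangle_n-1|\le\eta^2\}$ one has $\tau=n$ and $\widetilde S_n$ equals $S_n$ up to an independent perturbation of standard deviation at most $\eta$. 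Applying \eqref{sdfgs} to $\widetilde S$ yields, for $x\ge 0$,
\[
\big|\mathbf P(\widetilde S_n\le x)-\Phi(x)\big|\le C(1+x^2)\Big(\epsilon|\log\epsilon|+\frac{\eta}{1+x}\Big)\exp\Big\{-\frac{\widehat x^2}{2}\Big\},
\]
with $\widehat x$ evaluated at $\delta=\eta$. Because $x-\breve x=O(x^2\epsilon)$ and $\breve x-\widehat x=O(x\eta)$, I would split this into the first term of \eqref{thonineqs} (replacing $\widehat x$ by $\breve x$) plus an $\eta$-dependent remainder to be merged with the next step.

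It remains to estimate $\big|\mathbf P(S_n\le x)-\mathbf P(\widetilde S_n\le x)\big|$. This difference is nonzero only when $S_n$ and $\widetilde S_n$ fall on opposite sides of $x$, an event contained in $G^{c}$ together with a strip of width $O(\eta)$ about $x$; hence it is bounded by $\mathbf E|\langle S\rangle_n-1|/\eta^2+C\eta$ via Chebyshev and a concentration estimate for the appended block. For $x\ge0$ the same difference is also bounded by $\mathbf P(S_n>x)+\mathbf P(\widetilde S_n>x)$, which is at most $C\exp\{-\breve x^2/2\}$ by de la Pe\~{n}a's inequalities \eqref{scv} (applied on $G$ with $v^2=1+\eta^2$) and \eqref{fin2}. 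Interpolating between the uniform bound and this exponential bound, and then choosing $\eta$ to balance the modification cost $\eta$ against the Chebyshev cost $\mathbf E|\langle S\rangle_n-1|/\eta^2$, produces a contribution of the cube-root form $\big(\mathbf E|\langle S\rangle_n-1|+\epsilon^2\big)^{1/3}\exp\{-x^2/6\}$; here the minimization of $\eta\mapsto\eta+\mathbf E|\langle S\rangle_n-1|/\eta^2$ is the source of the exponent $1/3$, the factor $\exp\{-x^2/6\}=(\exp\{-x^2/2\})^{1/3}$ comes from the interpolation, and the additive $\epsilon^2$ reflects the intrinsic resolution at which $\langle S\rangle$ can be matched to $1$ by increments obeying (A1), i.e.\ a floor $\eta\ge c\,\epsilon^{2/3}$. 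Summing the two contributions gives \eqref{thonineqs}, and taking the supremum over $x$ while using $\epsilon|\log\epsilon|\le\epsilon^{2/3}$ for small $\epsilon$ yields \eqref{itmab}.

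The main obstacle I anticipate is the bookkeeping in the last paragraph: the a priori control on $\langle S\rangle_n-1$ is only in expectation and uniform in $x$, so upgrading it to the nonuniform, exponentially decaying factor $\exp\{-x^2/6\}$ requires combining the Chebyshev bound with the exponential tail bounds at exactly the interpolation exponent that matches the $\eta$-optimization, all while keeping the discrepancy between $\widehat x$, $\breve x$ and $x$ inside the exponential under control. Verifying that the appended Gaussian block both preserves (A1) with the original $\epsilon$ and perturbs $S_n$ by no more than $O(\eta)$ is the other point that needs care.
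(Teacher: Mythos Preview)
Your overall plan---modify $S$ via Bolthausen's stopping-and-filling device so that Theorem~\ref{th2.1} applies, then pay for the modification---is exactly what the paper does. The trouble is entirely in your second half, where the execution does not close and where the paper's route is both simpler and different from what you outline.

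First, the free window parameter $\eta$ is unnecessary. The paper stops at $\tau=\sup\{k\le n:\langle S\rangle_k\le 1\}$ and appends independent Rademacher increments of size $\epsilon$ (plus one leftover of size $\le\epsilon$) so that $\langle S'\rangle_N=1$ \emph{exactly}. Theorem~\ref{th2.1} is then invoked with $\delta=0$, which already gives the first term of \eqref{thonineqs} with $\breve x$ in the exponent; no passage from $\widehat x$ to $\breve x$ and no $\eta$-dependent remainder arise.

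Second, your control of the discrepancy $|\mathbf P(S_n\le x)-\mathbf P(\widetilde S_n\le x)|$ has two concrete gaps. The ``strip of width $O(\eta)$ contributes $C\eta$'' step needs an anti-concentration estimate for $S_n$ that you do not have. More seriously, the ``interpolation'' cannot deliver the stated factor: after your $\eta$-optimization the uniform bound is $\asymp M^{1/3}$ with $M=\mathbf E|\langle S\rangle_n-1|$, while your exponential bound is at best $e^{-x^2/2}+M/\eta^2$ (de la Pe\~na's inequality only controls $\mathbf P(S_n>x,\,\langle S\rangle_n\le v^2)$, so on $G^c$ you are back to Chebyshev). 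Even granting a clean bound $e^{-x^2/2}$, the inequality $\min(A,B)\le A^\alpha B^{1-\alpha}$ yields $M^{\alpha/3}e^{-(1-\alpha)x^2/2}$, and no $\alpha\in[0,1]$ gives simultaneously the exponent $1/3$ on $M$ and $1/6$ on $x^2$.

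The paper avoids all of this with the standard sandwich in a free \emph{shift} parameter $t>0$:
\[
\mathbf P(S_n\le x)\ \le\ \mathbf P(S'_N\le x+t)+\mathbf P(|S_n-S'_N|>t)
\ \le\ \Phi(x)+\frac{t}{\sqrt{2\pi}}\,e^{-x^2/2}+C(1+x^2)\epsilon|\log\epsilon|\,e^{-\breve x^2/2}+\frac{\mathbf E(S_n-S'_N)^2}{t^2},
\]
together with the elementary second-moment bound $\mathbf E(S_n-S'_N)^2\le C\big(\mathbf E|\langle S\rangle_n-1|+\epsilon^2\big)$; the additive $\epsilon^2$ here is just the one-step overshoot of the conditional variance at $\tau$, not a floor on any window parameter. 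Choosing $t=D^{1/3}e^{x^2/6}$ with $D=\mathbf E|\langle S\rangle_n-1|+\epsilon^2$ balances $te^{-x^2/2}$ against $D/t^2$ and produces the second term of \eqref{thonineqs} directly---both the exponent $1/3$ and the factor $e^{-x^2/6}$ come from this single optimization in $t$, not from any interpolation. The matching lower bound is obtained symmetrically, and \eqref{itmab} follows by bounding each term at its supremum over $x$.
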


For earlier results for martingales with bounded  differences, we refer to Joos (1991),
where the slower rate $\exp\left\{-\frac{|x|}{5 \log(1+|x|)}\right\}$ has been obtained.
\textcolor{black}{If in the hypothesis (A1) the conditional expectation is replaced by the non-conditional expectation,
then the convergence rate is less sharp than that in \eqref{thonineqs},
as shown by Theorem 3.2 of Lesigne and Voln\'y (2001).}
 %the convergence rate of order $n^{-1/3}$ can be obtained (cp. with the rate $n^{-1}$ in our case with $\varepsilon = cn^{-1/2}).$

Inspecting the proof of Theorem 1.5 of  Mourrat (2013),
%when $|\xi_i|\leq \epsilon,$
we see that using the Burkholder inequality instead of Chebyshev  inequality (\ref{chineqd}),
in the proof of Corollary \ref{co2.1},
the bound (\ref{itmab}) can be generalized  to
\begin{eqnarray} \label{itmhklm}
\sup_{x \in \mathbf{R}} \Big|\mathbf{P}(S_n\leq x) - \Phi(x) \Big|\, \leq \, C_p \,   \bigg( \Big(\mathbf{E}[| \left\langle S\right\rangle_n -1|^p] \Big)^{1/(2p+1) } + \epsilon^{2p/(2p+1)}   \bigg)
\end{eqnarray}
for any  $p\geq 1$, where $C_p$ depends only on $p$. Moreover,
 Mourrat (2013) (see also Grama (1988) and Haeusler   (1988a)) showed that when $|\xi_i|\leq C/\sqrt{n},$ a.s.,  the term $\big(\mathbf{E}[| \left\langle S\right\rangle_n -1|^p] \big)^{1/(2p+1) }$ in the bound (\ref{itmhklm}) is sharp in the sense of that (\ref{optim001}) holds true.
 With $p=1$ this implies that the term $\big(\mathbf{E}| \left\langle S\right\rangle_n -1| \big)^{1/3 }$ in (\ref{itmab}) is also sharp.

To prove Theorem \ref{th2.1}, we establish the following strengthened version of de la Pe\~{n}a's  inequality (\ref{fin2}).

\begin{theorem}\label{th2.2}
Assume conditions (A1) and (A2). Then,
for all $  x\geq 0 ,$
\begin{eqnarray}\label{iepx}
&&  \mathbf{P}(S_n>x )   \leq  \Big( \frac{}{}1- \Phi\left( \widehat{x} \right)\Big) \Big[\frac{}{} 1+ C \,\Big(1+\widehat{x} \Big)  \Big( \lambda ^2 \epsilon +  \lambda  \delta^2 + \epsilon \left| \log  \epsilon
 \right| +   \delta \Big )  \Big],
\end{eqnarray}
where $\widehat{x}$ is defined by (\ref{fbxr})
and
\begin{equation}\label{lambda}
 \lambda  = \frac{2x/(1+ \delta^2)}{ 1+2x\epsilon/(1+ \delta^2)+\sqrt{1+2x\epsilon/(1+ \delta^2)} } \in [0,   \epsilon^{-1}).
\end{equation}
In particular, for all $0 \leq x =o\Big(( \sqrt[3]{\epsilon}   + \delta )^{-1}  \Big)$
\begin{eqnarray}\label{f534s}
  \mathbf{P}(S_n>x ) \,\leq\,  \left(\frac{}{} 1- \Phi\left(\widehat{x}\right)\right) \left[1+ o(1) \right],
  \quad \max\{\epsilon, \delta\}\rightarrow 0.
\end{eqnarray}
\end{theorem}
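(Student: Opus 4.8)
The plan is to prove (\ref{iepx}) by a conjugate change of measure of Cram\'er type, reserving de la Pe\~na's bound (\ref{fin2}) for the range of large $x$. First I would dispose of the case where the correction factor is not small. Since $\lambda\uparrow\epsilon^{-1}$ as $x\to\infty$, the quantity $\lambda^2\epsilon$ exceeds a fixed constant $c_0$ precisely for large $x$; in that regime the right-hand side of (\ref{iepx}) already dominates $\big(1-\Phi(\widehat x)\big)\,C(1+\widehat x)c_0$, which by the lower bound $1-\Phi(\widehat x)\geq \frac{1}{\sqrt{2\pi}(1+\widehat x)}e^{-\widehat x^2/2}$ from (\ref{mc}) is at least $e^{-\widehat x^2/2}\geq\mathbf{P}(S_n>x)$ by (\ref{fin2}) once $C$ is large enough. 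So it suffices to treat the complementary moderate range, in which $\lambda^2\epsilon<c_0$; there $\lambda\epsilon$ stays bounded away from $1$ (recall $\epsilon\leq 1/2$) and all the Taylor expansions below are licit.

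For moderate $x$, introduce the exponential martingale $Z_k(\lambda)=\exp\{\lambda S_k-\Psi_k(\lambda)\}$, where $\Psi_k(\lambda)=\sum_{i=1}^k\log\mathbf{E}[e^{\lambda\xi_i}|\mathcal F_{i-1}]$, and the conjugate measure $d\mathbf{P}_\lambda=Z_n(\lambda)\,d\mathbf{P}$. Since $\mathbf{E}[\xi_i|\mathcal F_{i-1}]=0$, condition (A1) gives $\mathbf{E}[e^{\lambda\xi_i}|\mathcal F_{i-1}]\leq 1+\frac{\lambda^2}{2(1-\lambda\epsilon)}\mathbf{E}[\xi_i^2|\mathcal F_{i-1}]$, so that, summing and using (A2), $\Psi_n(\lambda)\leq \frac{\lambda^2}{2}+O(\lambda^2\delta^2+\lambda^3\epsilon)$ almost surely; this \emph{deterministic} upper bound $\Psi^+$ is the crucial payoff of the a.s.\ control in (A2). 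The identity $\mathbf{P}(S_n>x)=\mathbf{E}_\lambda[e^{-\lambda S_n+\Psi_n(\lambda)}\mathbf{1}_{\{S_n>x\}}]$ then yields, after writing $S_n=x+(S_n-x)$ and bounding $\Psi_n(\lambda)\leq\Psi^+$,
\[
\mathbf{P}(S_n>x)\leq e^{\Psi^+-\lambda x}\,\mathbf{E}_\lambda\big[e^{-\lambda(S_n-x)}\mathbf{1}_{\{S_n>x\}}\big].
\]

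The core of the argument is to evaluate this last expectation. Here I would establish a uniform Berry-Esseen bound for $S_n$ under $\mathbf{P}_\lambda$ (of the Grama type, rate $\epsilon|\log\epsilon|+\delta$): the centered increments $\xi_i-\mathbf{E}_\lambda[\xi_i|\mathcal F_{i-1}]$ form a martingale difference sequence under $\mathbf{P}_\lambda$ that still satisfies a Bernstein-type condition with a comparable constant (uniformly while $\lambda\epsilon$ is bounded away from $1$), and by (A2) their tilted predictable quadratic variation stays a.s.\ within $O(\delta^2+\lambda\epsilon)$ of a deterministic $\sigma^2$; this gives $\sup_y|\mathbf{P}_\lambda(S_n-x\leq y)-\Phi_\sigma(y-m)|\leq C(\epsilon|\log\epsilon|+\delta)$, where $m=\mathbf{E}_\lambda[S_n]-x$, $\Phi_\sigma$ is the $N(0,\sigma^2)$ law, and the saddle point $\lambda$ of (\ref{lambda}) is exactly the one making $m=o(1)$. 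Replacing the law of $S_n-x$ by its Gaussian surrogate through an integration by parts in $\int_0^\infty e^{-\lambda y}\,d(\cdot)$ costs at most $C(\epsilon|\log\epsilon|+\delta)$, while the Gaussian integral itself computes in closed form to $e^{\lambda^2\sigma^2/2}\big(1-\Phi(\lambda\sigma)\big)$.

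It remains to assemble the pieces. Combining the exponents, the net prefactor $e^{\Psi^+-\lambda x+\lambda^2\sigma^2/2}$ equals $1+O(\lambda^2\delta^2+\lambda^3\epsilon)$ by the saddle-point identity for a nearly quadratic cumulant, and the difference $\lambda\sigma-\widehat x=O(\lambda\delta^2+\lambda^2\epsilon)$, which one checks from the explicit forms (\ref{fbxr}) and (\ref{lambda}), both being $\lambda+O(\lambda\delta^2+\lambda^2\epsilon)$. Converting $1-\Phi(\lambda\sigma)$ into $1-\Phi(\widehat x)$ via the normal-tail ratio estimate $\frac{1-\Phi(u+\eta)}{1-\Phi(u)}=1+O((1+u)|\eta|)$ produces the factor $(1+\widehat x)(\lambda^2\epsilon+\lambda\delta^2)$, while the Berry-Esseen error, rescaled by $e^{\Psi^+-\lambda x}$, which is comparable to $e^{-\widehat x^2/2}$ and hence to $(1+\widehat x)(1-\Phi(\widehat x))$ by (\ref{mc}), contributes $(1+\widehat x)(\epsilon|\log\epsilon|+\delta)$; together these give (\ref{iepx}). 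Finally, (\ref{f534s}) is immediate once one notes $\lambda=x+O(x^2\epsilon+x\delta)$ and $\widehat x=x+O(x^2\epsilon+x\delta)$, so that $(1+\widehat x)(\lambda^2\epsilon+\lambda\delta^2+\epsilon|\log\epsilon|+\delta)=o(1)$ whenever $x=o((\sqrt[3]{\epsilon}+\delta)^{-1})$. I expect the main obstacle to be the Berry-Esseen step under the tilted measure: one must verify the Bernstein condition survives the tilt with a uniform constant, control the tilted quadratic variation almost surely, and argue that the $\epsilon|\log\epsilon|$ error genuinely survives the integration against $e^{-\lambda(S_n-x)}$ at the stated order rather than being amplified.
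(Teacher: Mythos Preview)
Your proposal is correct and follows essentially the same route as the paper: the Cram\'er conjugate change of measure $d\mathbf{P}_\lambda=Z_n(\lambda)\,d\mathbf{P}$, the a.s.\ bound $\Psi_n(\lambda)\leq\frac{\lambda^2(1+\delta^2)}{2(1-\lambda\epsilon)}$ from (A1)--(A2), a Berry--Esseen bound for the tilted martingale under $\mathbf{P}_\lambda$, the explicit Gaussian integral, and the conversion to $1-\Phi(\widehat x)$ via (\ref{mc}). Two small remarks. First, the paper does \emph{not} split off the large-$x$ range: because the specific saddle point $\overline{\lambda}$ in (\ref{lambda}) satisfies $\overline{\lambda}<\epsilon^{-1}$ for every $x\geq 0$, all the lemmas (in particular Lemma~\ref{LEMMA4}) are stated and proved uniformly in $\overline{\lambda}\in[0,\epsilon^{-1})$, so your preliminary appeal to (\ref{fin2}) is unnecessary, though harmless. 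Second, your claimed Berry--Esseen rate $C(\epsilon|\log\epsilon|+\delta)$ under the tilt is slightly too optimistic: the paper's Lemma~\ref{lem3} (quoted from \cite{FGL13}) gives $C(\lambda\epsilon+\epsilon|\log\epsilon|+\delta)$ for the conjugate martingale $Y_n(\lambda)$, and the shift by the drift $B_n(\lambda)-x$ adds $O(\lambda^2\epsilon+\lambda\delta^2)$ (Lemma~\ref{LEMMA4}); the extra $\lambda\epsilon$ is then absorbed via $\lambda\epsilon\leq C(\lambda^2\epsilon+\epsilon|\log\epsilon|)$ by splitting on $\lambda\lessgtr 1$, so your final bound is unaffected.
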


Since the martingale differences $(-\xi _i,\mathcal{F}_i)_{i=1,...,n}$ also satisfy conditions (A1) and (A2), the bound  (\ref{iepx})   can be also applied to obtain upper bounds of $\mathbf{P}(S_n< - x), x>0.$
\textcolor{black}{Our bound  (\ref{iepx}) decays exponentially to zero as $x\rightarrow \infty$ and also recovers closely the shape of the standard normal tail $1-\Phi(x)$ as $\max\{\epsilon, \delta\}\rightarrow 0$.}

 Using the two sides bound (\ref{mc}), it is easy to see that (\ref{iepx}) implies the following inequality: for all $ x \geq 0 ,$
\begin{eqnarray}
  \mathbf{P}(S_n>x )  \leq   F(x)  \exp\left\{ - \frac{ \widehat{x}^2 }{2} \right\} \label{f9},
\end{eqnarray}
where $F(x)=C \left(\frac{1}{1+ \widehat{x}} +  \lambda ^2 \epsilon +  \lambda  \delta^2 + \epsilon \left| \log  \epsilon
 \right| +   \delta   \right)\!.$
Note that when $\max\{\epsilon, \delta\}\rightarrow 0,$  we have $ F(x) \rightarrow \frac{C}{1+x}.$ Inequality (\ref{f534s}) strengthens de la Pe\~{n}a's inequality (\ref{fin2}) by adding a factor of order $\frac{1}{1+ x}$ as $\max\{\epsilon, \delta\}\rightarrow 0$. Thus (\ref{iepx}) is a strengthened version of  de la Pe\~{n}a's inequality (\ref{fin2}).

Our result (\ref{iepx})  can be compablack with the classical Cram\'{e}r  large deviation results in the i.i.d.\ case (see Cram\'{e}r (1938)). With respect to Cram\'{e}r's results, the advantage of (\ref{iepx}) is that it is valid for all $x\geq0$ instead of only for all $0\leq x =(\sqrt{n}), n\rightarrow \infty.$

\section{Applications in Statistics}\label{sec3}

\subsection{Linear regression}
%This model can be used to pblackict a response variable based on a covariate.
%For instance, it can be used to measure the height according to a certain
%given value of footprint size.
The linear regression model is given by
\begin{equation}\label{ine29}
X_{k}=\theta \phi_k + \varepsilon_{k}, \quad  1 \leq k \leq n,
\end{equation}
where $(X_k), (\phi_k)$ and $(\varepsilon_{k})$ are, respectively,  the responce variable, the covariate and the noise. We assume that $(\phi_k)$ is a sequence of independent  random variables. We also assume that $(\varepsilon_k)$ is a sequence of martingale differences with respect to its natural filtration $\mathcal{F}_k=\sigma\{\varepsilon_i, 1\leq i  \leq k \}$  with conditional variances satisfying $\mathbf{E}[\varepsilon_k^2| \mathcal{F}_{k-1}]=\sigma^2>0 $ a.s.   Moreover, we suppose that the sequences $(\phi_k)$ and  $(\varepsilon_k)$ are independent. Our interest is to estimate the unknown parameter $\theta$, based on the random variables $(X_k)$ and $(\phi_k)$.   The well-known least squares estimator $\theta_n$ is given by
\begin{equation}\label{ine30}
\theta_n = \frac{\sum_{k=1}^n \phi_{k} X_k}{\sum_{k=1}^n \phi_{k}^2}.
\end{equation}
To measure the accuracy of the convergence $\theta_n \to \theta$ as $n\rightarrow \infty$, many tight exponential inequalities on tail probability of $\theta_n -\theta$ have been established in Bercu and Touati (2008).
Here, we give an  estimation on the rate of convergence of the distribution of
$(\theta_n -\theta)\sqrt{\sum_{k=1}^n \phi_{k}^2}$ to the normal one.
\begin{theorem}\label{th3.1} \textcolor{black}{Assume that there exist  two positive numbers $\epsilon_1$ and $\epsilon_2$ satifying
$\epsilon :=  \epsilon_1 \epsilon_2/ \sigma \leq \frac12$
and such that}
\begin{equation}
\frac{|\phi_{k}|}{\sqrt{\sum_{j=1}^n \phi_{j}^2}} \leq \epsilon_1  \  \ \    \textrm{ a.s.\ for all}\ 1\leq k\leq n
\label{ttt001}
\end{equation}
and
\begin{equation}
\Big|\mathbf{E}[ \varepsilon_{k}^{l} | \mathcal{F}_{k-1} ]  \Big| \leq \frac12\, l!\, \epsilon_2^{l-2} \sigma^2 \  \   \ \ \textrm{a.s.\ for all}\ l\geq 2\ \textrm{and all}\  1\leq k\leq n.
\label{ttt002}
\end{equation}
%such that $\varepsilon_1 \varepsilon_2/\sigma   < 1/2$.
%If $\epsilon :=  \epsilon_1 \epsilon_2/ \sigma \leq \frac12,$
Then, for all $x \in \mathbf{R}$,
\begin{eqnarray}\label{th5ineq}
 \Big|\mathbf{P}\Big( (\theta_n -\theta)\sqrt{ \Sigma_{k=1}^n \phi_{k}^2} \leq x \sigma  \Big) - \Phi(x)  \Big| \, \leq \, C\, \left(\frac{}{}\! 1+ x^2 \right) \epsilon  \left|\log \epsilon  \right|  \exp\left\{-\frac{ \breve{x} ^2}{2}\right\} ,
\end{eqnarray}
where $ \breve{x} $ is defined by (\ref{tgn35}).
Moreover,   the following Berry-Esseen bound holds
\begin{eqnarray}
  \Big|\mathbf{P}\Big( (\theta_n -\theta)\sqrt{ \Sigma _{k=1}^n \phi_{k}^2} \leq x \sigma  \Big) - \Phi(x)  \Big| \, \leq \, C\ \epsilon \left|\log \epsilon  \right|.
\end{eqnarray}
For all $ 0\leq x  =O\big(\epsilon^{-1/3}  \big)$ as $\epsilon  \rightarrow 0,$ the following Cram\'{e}r large deviation result holds
\begin{eqnarray}\label{th5ineq}
  \frac{\mathbf{P}\left( (\theta_n -\theta)\sqrt{ \Sigma _{k=1}^n \phi_{k}^2} \geq x \sigma  \right)}{1-  \Phi(x) } =1+   \vartheta \, C \,\left(\frac{}{}\! 1+ x^3 \right) \epsilon \left|\log \epsilon  \right| ,
\end{eqnarray}
where $|\vartheta|\leq 1.$
%In particular, for all $ 0\leq x  =o\big((\epsilon\left|\log \epsilon  \right|)^{-1/3}  \big)$ as $\epsilon  \rightarrow 0,$
%it holds
%\begin{eqnarray}
%  \mathbf{P}\left( (\theta_n -\theta)\sqrt{ \Sigma _{k=1}^n \phi_{k}^2} \geq x \sigma  \right)= \Big(1-  \Phi(x) \Big) \Big(1+o(1) \Big).
%\end{eqnarray}
\end{theorem}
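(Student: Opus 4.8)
The plan is to realize the rescaled estimation error as a normalized martingale obeying (A1) and (A2), and then simply to read off the three conclusions from Theorem~\ref{th2.1} together with its consequences \eqref{brmti} and \eqref{crd12}. Substituting the model \eqref{ine29} into the least squares estimator \eqref{ine30} cancels the signal term, leaving
\[
\theta_n-\theta=\frac{\sum_{k=1}^n\phi_k\varepsilon_k}{\sum_{k=1}^n\phi_k^2}.
\]
Consequently the rescaled error is the martingale-type sum
\[
(\theta_n-\theta)\frac{\sqrt{\sum_{k=1}^n\phi_k^2}}{\sigma}=\sum_{k=1}^n\eta_k=:S_n,
\qquad
\eta_k:=\frac{\phi_k\varepsilon_k}{\sigma\sqrt{\sum_{j=1}^n\phi_j^2}},
\]
so the whole matter reduces to verifying that $(\eta_k)$ is a martingale difference sequence satisfying (A1) with $\epsilon=\epsilon_1\epsilon_2/\sigma$ and (A2) with $\delta=0$.

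The principal obstacle is that the normalizer $\sqrt{\sum_{j=1}^n\phi_j^2}$ involves the entire covariate sequence and is not measurable for the natural filtration $\mathcal{F}_k=\sigma\{\varepsilon_i:i\le k\}$, so that $\eta_k$ is not $(\mathcal{F}_k)$-adapted. I would resolve this by enlarging the filtration to $\mathcal{G}_k:=\sigma\{\phi_1,\dots,\phi_n\}\vee\mathcal{F}_k$. Since $(\phi_k)$ and $(\varepsilon_k)$ are independent, the coefficient $c_k:=\phi_k\big/\big(\sigma\sqrt{\sum_{j}\phi_j^2}\big)$ is $\mathcal{G}_0$-measurable and, crucially, adjoining the $\phi$'s to the conditioning does not disturb the noise: $\mathbf{E}[\varepsilon_k|\mathcal{G}_{k-1}]=\mathbf{E}[\varepsilon_k|\mathcal{F}_{k-1}]=0$ a.s. Hence $\eta_k=c_k\varepsilon_k$ is a martingale difference for $(\mathcal{G}_k)$, and the same independence gives $\mathbf{E}[\eta_k^l|\mathcal{G}_{k-1}]=c_k^{\,l}\,\mathbf{E}[\varepsilon_k^l|\mathcal{F}_{k-1}]$ for every $l\ge2$.

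With this filtration the two conditions are immediate. Writing $p_k:=|\phi_k|/\sqrt{\sum_j\phi_j^2}\le\epsilon_1$ by \eqref{ttt001}, the hypothesis $\mathbf{E}[\varepsilon_k^2|\mathcal{F}_{k-1}]=\sigma^2$ gives $\mathbf{E}[\eta_k^2|\mathcal{G}_{k-1}]=p_k^2$, whence
\[
\langle S\rangle_n=\sum_{k=1}^n\frac{\phi_k^2}{\sum_{j=1}^n\phi_j^2}=1\qquad\textrm{a.s.},
\]
which is precisely (A2) with $\delta=0$. For (A1), combining this with \eqref{ttt002} yields, for all $l\ge2$,
\[
\big|\mathbf{E}[\eta_k^l|\mathcal{G}_{k-1}]\big|
=\frac{p_k^{\,l}}{\sigma^l}\,\big|\mathbf{E}[\varepsilon_k^l|\mathcal{F}_{k-1}]\big|
\le\frac12\,l!\,\Big(\tfrac{p_k\epsilon_2}{\sigma}\Big)^{l-2}p_k^2
\le\frac12\,l!\,\epsilon^{\,l-2}\,\mathbf{E}[\eta_k^2|\mathcal{G}_{k-1}],
\]
since $p_k\epsilon_2/\sigma\le\epsilon_1\epsilon_2/\sigma=\epsilon\le\tfrac12$; this is (A1) with the stated $\epsilon$.

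It then remains only to specialize the general bounds. Setting $\delta=0$ in \eqref{fbxr} collapses $\widehat{x}$ to $\breve{x}$ of \eqref{tgn35} and reduces the factor $\epsilon|\log\epsilon|+\delta/(1+|x|)$ to $\epsilon|\log\epsilon|$, so Theorem~\ref{th2.1} applied to $S_n$ delivers the first displayed estimate; the associated Berry-Esseen bound is the $\delta=0$ case of \eqref{brmti}, and the Cram\'er expansion is the $\delta=0$ case of \eqref{crd12}, whose validity range $0\le x\le\min\{\epsilon^{-1/3},\delta^{-1}\}$ degenerates to $0\le x=O(\epsilon^{-1/3})$. The lower-tail probability entering the large deviation ratio is obtained by running the identical argument for the martingale differences $(-\eta_k,\mathcal{G}_k)$, which satisfy the very same hypotheses.
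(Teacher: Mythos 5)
Your proposal is correct and follows essentially the same route as the paper: both reduce the rescaled error to the normalized martingale $\sum_{k}\phi_k\varepsilon_k/\big(\sigma\sqrt{\sum_{j}\phi_j^2}\big)$, verify condition (A1) with $\epsilon=\epsilon_1\epsilon_2/\sigma$ and condition (A2) with $\delta=0$ (since $\langle S\rangle_n=1$ a.s.), and then read the three conclusions off Theorem~\ref{th2.1} together with \eqref{brmti} and \eqref{crd12}. The only difference is cosmetic: the paper enlarges the filtration to $\sigma\big(\phi_k,\varepsilon_k,\,1\le k\le i;\ \phi_k^2,\,1\le k\le n\big)$, whereas you adjoin all of $\phi_1,\dots,\phi_n$ at time zero, which if anything makes the factorization $\mathbf{E}[\eta_k^l\mid\mathcal{G}_{k-1}]=c_k^l\,\mathbf{E}[\varepsilon_k^l\mid\mathcal{F}_{k-1}]$ slightly cleaner.
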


In the real-world applications, for instance considering the impact of the footprint size $\phi_{k}$ on the height $X_k$, it is plausible that
$a\leq \phi_{k} \leq b $ a.s.\ for two positive constants $a$ and $b$ and that $ \epsilon_2$ is a constant. In particular, if $(\varepsilon_{k})_{k=1,...,n}$ is a sequence of independent
random variables satisfying the Bernstein  condition
\[
  \Big|\mathbf{E} [ \varepsilon_{k}^l ] \Big|  \,  \leq \, \frac12  l! C^{l-2}  \mathbf{E} [\varepsilon_{k}^2 ]  \ \ \textrm{for all}\ l\geq 2\  \textrm{and all}\ 1\leq k\leq n,
\]
then the conditions (\ref{ttt001}) and  (\ref{ttt002}) of Theorem \ref{th3.1} are  satisfied with $\epsilon_1 = \frac{b}{a \sqrt{n  } }$ and $\epsilon=O(\frac1{\sqrt{n}})$ as $n \rightarrow \infty.$

\subsection{Self-normalized deviations}

The self-normalized deviations have attracted lots of attentions due to the useful application to Student's $t$-statistic; we refer to  Shao (1997, 1999) and Jing, Shao and Wang (2003).
 The first exponential nonuniform Berry-Esseen bound for the self-normalized mean for symmetric
random variables $(\xi_i)_{i=1,...,n}$ has been established by Wang and Jing (1999) (cf.\ Theorem 2.1 therein): if $\mathbf{E}[|\xi_i|^3] < \infty$ for all $i \in [1, n],$ then
\begin{eqnarray}
&&  \bigg|\frac{}{} \mathbf{P}\bigg( \frac{S_n}{\sqrt{[S]_n}} \leq x \bigg)-  \Phi\left( x\right) \bigg| \ \ \ \ \ \ \ \   \ \ \ \    \ \ \ \   \ \ \ \ \  \  \nonumber \\
&& \leq\left\{ \begin{array}{ll}
C\bigg( L_{3n}   \Big( 1+ x^2  \Big)+ \sum_{i=1}^n\mathbf{P}\Big(|\xi_i|\geq B_n(6|x|)^{-1} \Big) \bigg)\exp\bigg\{-\frac{ x^2}{2}\bigg\} ,\ \\
   \ \ \ \ \ \ \ \  \ \   \ \ \ \  \ \   \ \ \ \    \ \ \ \  \ \   \ \ \ \  \ \  \ \ \  \ \ \ \ \  \ \   \ \ \ \ \ \ \ \  \ \  \  \ \  \  \ \ \   \textrm{if\ \   $|x| \leq \Big( 5 L_{3n}^{1/3} \Big)^{-1}$,  }
\\
\bigg(  1+  \frac{1}{\sqrt{2\pi} |x|} \bigg)\exp\bigg\{-\frac{ x^2}{2}\bigg\},   \ \ \ \   \ \ \ \  \ \ \ \   \ \ \ \    \ \ \ \  \ \ \ \     \textrm{if\ \  $|x| > \Big( 5 L_{3n}^{1/3} \Big)^{-1}$ },
\end{array} \right. \ \ \ \ \  \ \ \ \label{wjineq}
\end{eqnarray}
where $B_n^2=\mathbf{E}[S_n^2],$ $L_{3n}= B_n^{-3}\sum_{i=1}^n \mathbf{E}[|\xi_i|^3]$ and $[S]_n=\sum_{i=1}^n\xi_i^2 $ is  the  square bracket  of $S_n$.
Here, by convention, we assume $\frac00=0.$
In the following theorem, we give a  result  similar to  inequality (\ref{wjineq}) of Wang and Jing (1999) via a new method based on martingale theory.  In particular, our result
 does not depend on the moments of random variables.

\begin{theorem}\label{th3.3} Let $(\xi_i)_{i=1,...,n}$ be a sequence of non-degenerate, independent and symmetric random variables. If there exists a number  $\epsilon \in  (0, \frac12]$, possibly depending on $n$, such that
\begin{eqnarray}\label{fhsnnl}
  \frac{|\xi_i| }{ \sqrt{[S]_n}  }  \leq \epsilon \ \ \ \ \ \ \textrm{a.s.\ for all} \ \  i \in [1, n],
\end{eqnarray}
then, for all $x \in \mathbf{R}$,
\begin{eqnarray} \label{fgs}
  \ \ \  \Big| \mathbf{P}\Big( \frac{S_n}{\sqrt{[S]_n}} \leq x \Big)-  \Phi\left( x\right) \Big|  \,\leq \, C\, \epsilon \, |\log \epsilon|  \left(\frac{}{}\! 1+ x^2 \right) \exp\left\{-\frac{ x ^2}{2}\right\} ,
\end{eqnarray}
and, for all $x  \geq 0$,
\begin{eqnarray}\label{fgs01}
  \frac{\mathbf{P}\Big( \frac{S_n}{\sqrt{[S]_n}} > x  \Big)}{1-  \Phi(x) } \ = \  1+   \vartheta \, C \,\left(\frac{}{}\! 1+ x^3 \right)  \, \epsilon \, |\log \epsilon| ,
\end{eqnarray}
where $|\vartheta| \leq 1.$
\end{theorem}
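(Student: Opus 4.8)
My plan is to use the symmetry of the $\xi_i$ to turn the self-normalized sum into a conditionally weighted Rademacher sum, to which Theorem~\ref{th2.1} applies with $\delta=0$. Writing $\xi_i=\eta_i|\xi_i|$ with $\eta_i=\mathrm{sgn}(\xi_i)$, symmetry and independence make $(\eta_i)$ i.i.d.\ Rademacher variables, independent of $\mathcal{G}:=\sigma(|\xi_1|,\dots,|\xi_n|)$. Conditionally on $\mathcal{G}$ the weights $a_i:=|\xi_i|/\sqrt{[S]_n}$ are constants and
\[
\frac{S_n}{\sqrt{[S]_n}}=\sum_{i=1}^n\eta_i a_i=:W ,
\]
with $\sum_{i=1}^n a_i^2=1$ and $0\le a_i\le\epsilon$ almost surely by (\ref{fhsnnl}). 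First I would check that, conditionally on $\mathcal{G}$, the partial sums of $W$ form a martingale satisfying (A1) with this $\epsilon$ and (A2) with $\delta=0$: since the $\eta_i$ are independent, $\mathbf{E}[(\eta_i a_i)^k\mid\mathcal{G}]=a_i^k\,\mathbf{E}[\eta_i^k]$ equals $a_i^k$ for even $k$ and $0$ for odd $k$, hence is bounded by $\epsilon^{k-2}a_i^2\le\frac12 k!\,\epsilon^{k-2}a_i^2$, while $\langle W\rangle_n=\sum_i a_i^2=1$. As the target bounds depend only on $\epsilon$ and $x$, it suffices to prove the conditional statements and then integrate in $\mathcal{G}$.

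Because $\delta=0$ forces $\widehat{x}=\breve{x}$, the Cram\'er expansion (\ref{crd12}), applied conditionally to $W$, gives on the moderate range $0\le x\le\epsilon^{-1/3}$ the deterministic estimate
\[
\bigl|\mathbf{P}(W>x\mid\mathcal{G})-(1-\Phi(x))\bigr|\le C(1+x^3)\epsilon|\log\epsilon|\,(1-\Phi(x));
\]
integrating in $\mathcal{G}$ yields (\ref{fgs01}) on this range.

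For $x>\epsilon^{-1/3}$ one has $C(1+x^3)\epsilon|\log\epsilon|\ge C|\log\epsilon|\ge C\log2\ge1$ once $C$ is large, so the lower half of (\ref{fgs01}) is automatic from $\mathbf{P}(W>x)\ge0$. For the upper half I would invoke a classical Eaton--Pinelis tail comparison for weighted Rademacher sums with $\sum_i a_i^2=1$, namely $\mathbf{P}(W>x\mid\mathcal{G})\le c\,(1-\Phi(x))$ with a universal constant $c$; integrating and enlarging $C$ so that $1+C\log2\ge c$ closes (\ref{fgs01}) for all $x\ge0$. The Berry--Esseen bound (\ref{fgs}) then follows: for $x\ge0$ the two-sided estimate (\ref{mc}) converts the relative error of (\ref{fgs01}) into $C(1+x^2)\epsilon|\log\epsilon|\exp\{-x^2/2\}$ (using $(1+x^3)/(1+x)\le C(1+x^2)$), and for $x<0$ one appeals to the symmetry of $W$.

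The hard part is the tail in the intermediate zone $\epsilon^{-1/3}<x\ll\epsilon^{-1/2}$. Here the black-box outputs of Theorems~\ref{th2.1} and~\ref{th2.2} carry the exponent $\breve{x}^2/2$, and since $\exp\{(x^2-\breve{x}^2)/2\}\approx\exp\{x^3\epsilon/2\}\to\infty$ on this zone, they are too lossy to reach the Gaussian exponent $x^2/2$ required by (\ref{fgs}). The decisive structural fact is that the odd conditional moments of the Rademacher summands vanish, so the exact transform $\prod_i\cosh(ta_i)\le\exp\{t^2/2\}$ delivers the sharp exponent; the Eaton--Pinelis comparison packages precisely this gain together with the correct $1/(1+x)$ prefactor. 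If one prefers to stay within the paper's methods, the same sharpened tail can be reproved by rerunning the conjugate-measure argument behind Theorem~\ref{th2.2} with $\cosh(ta_i)$ replacing the Bernstein majorant.
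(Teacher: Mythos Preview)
Your setup coincides with the paper's: the paper also conditions on $\mathcal{F}_0=\sigma(\xi_1^2,\dots,\xi_n^2)$, observes that $\eta_i=\xi_i/\sqrt{[S]_n}$ form a conditionally symmetric, bounded martingale difference sequence with $\sum_i\mathbf{E}[\eta_i^2\mid\mathcal{F}_{i-1}]=\sum_i\eta_i^2=1$, and then verifies (A1)--(A2) with $\delta=0$. For large $x$, however, the paper takes exactly the route you sketch in your last paragraph, not your primary route: from conditional symmetry it gets $\mathbf{E}[e^{\lambda\eta_i}\mid\mathcal{F}_{i-1}]=\mathbf{E}[\cosh(\lambda\eta_i)\mid\mathcal{F}_{i-1}]\le e^{\lambda^2\eta_i^2/2}$, hence $\Psi_n(\lambda)\le\lambda^2/2$ for all $\lambda\ge0$, and then simply reruns the proof of Theorem~\ref{th2.1} with this sharpened $\Psi$-bound in place of Lemma~\ref{lemma2}. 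This delivers the Gaussian exponent $x^2/2$ (rather than $\breve{x}^2/2$) uniformly in $x$ without any case splitting.

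Your two-range argument (Cram\'er expansion on $x\le\epsilon^{-1/3}$, Eaton--Pinelis on $x>\epsilon^{-1/3}$) is correct and yields the same conclusions; the derivation of (\ref{fgs}) from (\ref{fgs01}) via (\ref{mc}) is clean. The trade-off is that you import an external tail comparison, whereas the paper stays self-contained by exploiting the $\cosh$ structure inside its own machinery. Since you already identified that alternative, you have both proofs in hand.
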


We continue with some comments and remarks on the obtained results.
\begin{enumerate}
%\item  For simplicity we have made the assumption  (\ref{fhsnnl}) such that condition (A1) is
%satisfied for the martingale difference  $ \frac{ \xi_i  }{ \sqrt{[S]_n}  } .$  This assumption ensures that
%Theorem \ref{th3.3} is a simple consequence of Theorem \ref{th2.1}.

\item Condition (\ref{fhsnnl}) in Theorem \ref{th3.3} is  satisfied when $\xi_i$ are all bounded. For instance,  if $a\leq |\xi_i | \leq b $ a.s.\ for two positive constants $a$ and $b$, then it is obvious that the condition of Theorem \ref{th3.3} is  satisfied with $\epsilon = \frac{b}{a \sqrt{n  } }=O(\frac1{\sqrt{n}})$ as $n \rightarrow \infty.$

\item We conjecture that when $(\xi_i)_{1\leq i \leq n}$ are symmetric and i.i.d.\  non-degenerate random variables with moments of order $3$, the following results (\textcolor{black}{similar to that of Fan, Grama  and Liu (2015a)}) hold: for all $0 \leq x =o(\sqrt{n}),$
\begin{eqnarray}\label{fgscgh01}
\ \ \ \ \ \ \ \ \   \mathbf{P}\Big( \frac{S_n}{\sqrt{[S]_n}} > x \Big) \, =\,
\inf_{\lambda\geq 0} \mathbf{E} \Big[ e^{\lambda (\frac{S_n}{\sqrt{[S]_n}}- x) } \Big]
\Big(1+ O\Big(\frac{1+x}{\sqrt{n}}\Big)  \Big),
\end{eqnarray}
and, for all $ 0 \leq x =o( n^{1/4} ) $, with $|\vartheta| \leq 1,$
\begin{eqnarray}\label{fgscgh}
  \frac{\mathbf{P}\Big( \frac{S_n}{\sqrt{[S]_n}} > x  \Big)}{1-  \Phi(x) } \ = \  1+   \vartheta \, C \, \frac{1+ x^3 }{\sqrt{n}}.
\end{eqnarray}
%However, the proof  of (\ref{fgscgh01}) and (\ref{fgscgh}) is beyond the scope of this paper.

\item For more Cram\'{e}r-type large deviation results on self-normalized sequences, we refer to  Shao (1997) and Jing, Shao and Wang (2003).  In particular, without assuming that $(\xi _i)_{i=1,...,n}$ are symmetric, equalities of type (\ref{fgscgh}) in the range $0 \leq x =o( n^{1/6} )$ have been established therein. This range of $x$ is the best possible for non-symmetric random variables $(\xi _i)_{i=1,...,n}.$ %but (\ref{fgscgh}) holds in a larger range for symmetric random variables.

\end{enumerate}

\section{Proofs of Theorems \ref{th2.1} - \ref{th2.2}}\label{sec4}
In this section we prove the main results of the paper.
We start with some auxiliary statements, then we prove Theorem  \ref{th2.2}   which in turn will be used in the proof of Theorem  \ref{th2.1}.

\textcolor{black}{In the sequel, for simplicity, the equalities and inequalities involving random variables will be understood in the a.s. sense without mentioning this.
%The a.s. inequalities do not cause any problem in the proofs since the expectation is not changed by the events of 0-measure.
}

\subsection{Auxiliary results}
Let $(\xi _i,\mathcal{F}_i)_{i=0,...,n}$ be a sequence of martingale differences
satisfying condition (A1) and $S=(S_k,\mathcal{F}_k)_{k=0,...,n}$
be the corresponding martingale defined by (\ref{matingal}). For any real number $\lambda$ with $|\lambda| < \epsilon^{-1} ,$
define the \emph{exponential multiplicative martingale} $Z(\lambda
)=(Z_k(\lambda ),\mathcal{F}_k)_{k=0,...,n},$ where
\[
Z_0(\lambda )=1,\quad \quad \quad Z_k(\lambda )=\prod_{i=1}^k\frac{e^{\lambda \xi _i}}{\mathbf{E}[e^{\lambda \xi _i}|
\mathcal{F}_{i-1}]},\quad k=1,...,n.  \label{C-1}
\]
For each $k=1,...,n,$ the random variable $Z_k(\lambda
) $ defines a probability density on $(\Omega ,\mathcal{F},\mathbf{P}).$ This allows us to introduce, for $|\lambda|
 <\epsilon^{-1},$ the \emph{conjugate probability measure} $\mathbf{P}_\lambda $ on $(\Omega ,%
\mathcal{F})$ defined by
\begin{equation}
d\mathbf{P}_\lambda =Z_n(\lambda )d\mathbf{P}.  \label{f21}
\end{equation}
Denote by $\mathbf{E}_{\lambda}$ the expectation with respect to $\mathbf{P}_{\lambda}$.
For all $i=1,\dots,n$, let
\[
\eta_i(\lambda)=\xi_i - b_i(\lambda)\ \ \ \ \ \ \ \ \  \textrm{and} \ \ \ \  \ \ \  \ \  b_i(\lambda)=\mathbf{E}_{\lambda}[\xi_i |\mathcal{F}_{i-1}].  \]
We thus obtain the following decomposition:
\begin{equation}
X_k=Y_k(\lambda )+B_k(\lambda ),\quad\quad\quad k=1,...,n, \label{xb}
\end{equation}
where $Y(\lambda )=(Y_k(\lambda ),\mathcal{F}_k)_{k=1,...,n}$ is the \emph{%
conjugate martingale} defined as
\begin{equation}\label{f23}
Y_k(\lambda )=\sum_{i=1}^k\eta _i(\lambda ),\quad\quad\quad k=1,...,n,
\end{equation}
and
$B(\lambda )=(B_k(\lambda ),\mathcal{F}_k)_{k=1,...,n}$ is the \emph{%
drift process} defined as
\[
B_k(\lambda )=\sum_{i=1}^kb_i(\lambda ),\quad\quad\quad k=1,...,n.
\]
In the proofs of theorems, we shall  make use of the following bounds of $B_n(\lambda )$.
\begin{lemma}
\label{lemma1} Assume conditions (A1) and (A2). Then, for all $0 \leq \lambda < \epsilon^{-1} ,$
\begin{eqnarray*}
\lambda -\lambda\delta^2 - C \lambda^2\epsilon    \ \leq \  B_n(\lambda ) \ \leq \  \frac{\lambda-0.5\lambda^2\epsilon}{ (1-\lambda\epsilon)^2} \left(1+\delta^2 \right). %\quad \text{a.s.}
\end{eqnarray*}
\end{lemma}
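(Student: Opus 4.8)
The plan is to reduce everything to matching per-term estimates for the drift increments $b_i(\lambda)=\mathbf{E}_\lambda[\xi_i|\mathcal{F}_{i-1}]$ and then sum, invoking (A2) only at the very end. Writing $\sigma_i^2:=\mathbf{E}[\xi_i^2|\mathcal{F}_{i-1}]$, the density (\ref{f21}) gives the closed form $b_i(\lambda)=N_i/D_i$ with $N_i=\mathbf{E}[\xi_i e^{\lambda\xi_i}|\mathcal{F}_{i-1}]$ and $D_i=\mathbf{E}[e^{\lambda\xi_i}|\mathcal{F}_{i-1}]$. Since $\mathbf{E}[\xi_i|\mathcal{F}_{i-1}]=0$, expanding in powers of $\lambda$ yields
\[
N_i=\lambda\sigma_i^2+\sum_{k\geq2}\frac{\lambda^k}{k!}\mathbf{E}[\xi_i^{k+1}|\mathcal{F}_{i-1}],\qquad D_i=1+\sum_{k\geq2}\frac{\lambda^k}{k!}\mathbf{E}[\xi_i^{k}|\mathcal{F}_{i-1}],
\]
and condition (A1) dominates each remaining conditional moment by explicit geometric-type series convergent for $\lambda\epsilon<1$.

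For the upper bound I would first record two structural facts: $D_i\geq1$ by conditional Jensen (as $\mathbf{E}[\xi_i|\mathcal{F}_{i-1}]=0$), and $N_i\geq0$ for $\lambda\geq0$ because $\xi_i$ and $e^{\lambda\xi_i}$ are both nondecreasing in $\xi_i$, so their conditional covariance is nonnegative while $\mathbf{E}[\xi_i|\mathcal{F}_{i-1}]=0$. Hence $b_i(\lambda)\leq N_i$. Applying (A1) in the form $\mathbf{E}[\xi_i^{k+1}|\mathcal{F}_{i-1}]\leq\frac12(k+1)!\,\epsilon^{k-1}\sigma_i^2$ and summing gives
\[
N_i\leq\lambda\sigma_i^2+\frac{\sigma_i^2}{2}\sum_{k\geq2}(k+1)\lambda^k\epsilon^{k-1}=\sigma_i^2\,\frac{\lambda-0.5\lambda^2\epsilon}{(1-\lambda\epsilon)^2},
\]
the last equality being the identity $\lambda(1-\lambda\epsilon)^2+\frac12\lambda^2\epsilon(3-2\lambda\epsilon)=\lambda-0.5\lambda^2\epsilon$. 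As the coefficient is nonnegative for $\lambda\epsilon<1$, summing over $i$ and inserting $\sum_i\sigma_i^2=\langle S\rangle_n\leq1+\delta^2$ from (A2) produces the stated upper bound.

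For the lower bound the crucial extra ingredient is the automatic estimate $\sigma_i^2\leq12\epsilon^2$, obtained from (A1) with $k=4$ together with conditional Jensen via $(\sigma_i^2)^2\leq\mathbf{E}[\xi_i^4|\mathcal{F}_{i-1}]\leq12\epsilon^2\sigma_i^2$. I would then split on the size of $\lambda\epsilon$. When $\lambda\epsilon$ exceeds a small absolute constant $c_0$, the target $\lambda-\lambda\delta^2-C\lambda^2\epsilon$ is negative for $C$ large while $B_n(\lambda)\geq0$ (each $b_i\geq0$), so the bound is trivial. When $\lambda\epsilon\leq c_0$, all remainders are genuinely small: writing $N_i\geq\lambda\sigma_i^2-|R_i|$ with $|R_i|\leq C\epsilon\lambda^2\sigma_i^2$, $0\leq D_i-1\leq C\epsilon^2\lambda^2$ (here $\sigma_i^2\leq12\epsilon^2$ converts the spurious factor $\sigma_i^2$ into $\epsilon^2$), and using $1/D_i\geq1-(D_i-1)$, I obtain $b_i(\lambda)\geq\lambda\sigma_i^2-C\epsilon\lambda^2\sigma_i^2$. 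Summing and using $\langle S\rangle_n\geq1-\delta^2$, $\langle S\rangle_n\leq1+\delta^2$ with $\delta\leq1$ gives $B_n(\lambda)\geq\lambda-\lambda\delta^2-C\lambda^2\epsilon$.

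The routine part is the summation of the geometric series and the exact cancellation yielding the clean upper constant. The main obstacle is the lower bound, where the cubic-in-$\lambda$ error $\lambda\sigma_i^2(D_i-1)$ must be reabsorbed into the quadratic error $C\lambda^2\epsilon\sigma_i^2$; this is exactly what the a priori bound $\sigma_i^2=O(\epsilon^2)$ — itself a consequence of (A1) — makes possible, and without it the clean form of the lower bound would not follow.
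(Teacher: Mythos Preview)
Your argument is correct and, for the upper bound, follows the paper's proof essentially line by line: the same use of $D_i\ge 1$ via Jensen and $N_i\ge 0$, the same Taylor expansion and (A1)-based domination, and the same algebraic identity collapsing the series to $\frac{\lambda-0.5\lambda^2\epsilon}{(1-\lambda\epsilon)^2}$. For the lower bound the paper does not give a proof at all but simply cites Lemma~4.2 of \cite{FGL13}; your self-contained argument via the a~priori estimate $\sigma_i^2\le 12\epsilon^2$ and the case split on $\lambda\epsilon$ is a valid way to fill that gap and is in the same spirit as what that reference does.
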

\begin{proof}
 Since $\mathbf{E}[\xi _i|\mathcal{F}_{i-1}]=0$ and $\lambda\geq 0$,  it follows that
 \[
\mathbf{E}[\xi_{i} e^{\lambda\xi_{i}} |\mathcal{F}_{i-1}]=\mathbf{E}[\xi_{i}(e^{\lambda\xi_{i}}-1)|\mathcal{F}_{i-1}]\geq 0
\]
and, by Jensen's inequality,
$\mathbf{E}[e^{\lambda \xi _i}|\mathcal{F}_{i-1}]\geq 1$.
Using Taylor's expansion of $e^x$, we get
\begin{eqnarray}\label{bn}
B_n(\lambda ) & =& \sum_{i=1}^{n} \frac{\mathbf{E}[\xi_{i} e^{\lambda \xi_{i}} | \mathcal{F}_{i-1}]}{\mathbf{E}[ e^{\lambda \xi_{i}} | \mathcal{F}_{i-1}]}  \nonumber\\
& \leq & \sum_{i=1}^{n}\mathbf{E}[\xi_{i} e^{\lambda \xi_{i}} | \mathcal{F}_{i-1}]\nonumber\\
 & = &  \lambda\langle S\rangle_{n}+ \sum_{i=1}^{n}\sum_{k=2}^{+\infty}\mathbf{E}\bigg[\frac{\xi_{i}(\lambda\xi_{i})^{k}}{k !} \bigg| \mathcal{F}_{i-1} \bigg]   .
\end{eqnarray}
By condition (A1), for all $0 \leq \lambda < \epsilon^{-1} ,$ we deduce
\begin{eqnarray}\label{sum29}
  \sum_{i=1}^{n}\sum_{k=2}^{+\infty}\bigg| \mathbf{E}\bigg[\frac{\xi_{i}(\lambda\xi_{i})^{k}}{k !} \bigg| \mathcal{F}_{i-1} \bigg] \bigg|
  & = & \sum_{i=1}^{n}\sum_{k=2}^{+\infty}|\mathbf{E}[\xi_{i}^{k+1}| \mathcal{F}_{i-1}]| \frac{\lambda^k}{k !} \ \nonumber\\
  & \leq & \frac12\, \lambda^2 \epsilon \,  \langle S \rangle_{n} \sum_{k=2}^{+\infty}(k+1)(\lambda\epsilon)^{k-2} \nonumber \\
& = & \frac{(3-2\lambda\epsilon)}{2(1-\lambda\epsilon)^2}\, \lambda^2 \epsilon \, \langle S \rangle_{n}  .
\end{eqnarray}
Inequalities (\ref{sum29}) and (\ref{bn}) imply that, for all $0 \leq \lambda < \epsilon^{-1} ,$
\begin{eqnarray*}
 B_n(\lambda )  \leq  \lambda\langle S \rangle_{n} +  \frac{(3-2\lambda\epsilon)}{2(1-\lambda\epsilon)^2}\, \lambda^2 \epsilon  \langle S \rangle_{n}
  =   \frac{\lambda-0.5\lambda^2\epsilon}{ (1-\lambda\epsilon)^2} \langle S \rangle_{n}.
\end{eqnarray*}
This bound together with condition (A2) gives  the upper bound of $B_{n}(\lambda)$. The lower bound of $B_{n}(\lambda)$ can be found in Lemma 4.2 of \cite{FGL13}.
\end{proof}

Consider the pblackictable process $\Psi (\lambda )=(\Psi
_k(\lambda ),\mathcal{F}_k)_{k=0,...,n},$ where
\begin{equation}
\Psi _k(\lambda )=\sum_{i=1}^k\log \mathbf{E}[e^{\lambda \xi _i}|\mathcal{F}_{i-1}].
\label{C-3}
\end{equation}
We have the following elementary bound.
\begin{lemma}
\label{lemma2} Assume conditions (A1) and (A2).  Then, for all $0 \leq \lambda < \epsilon^{-1} ,$
\begin{eqnarray} \label{ghnle2}
\Psi _n(\lambda )  \leq    \frac{\lambda^2(1+\delta^2)}{2 (1-\lambda\epsilon)}.
\end{eqnarray}
\end{lemma}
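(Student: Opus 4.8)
The plan is to bound $\Psi_n(\lambda)$ termwise by exploiting the elementary inequality $\log u \le u-1$, which converts the sum of logarithms into a sum of conditional moment generating functions that I can control directly via condition (A1). First I would write
\[
\Psi_n(\lambda) = \sum_{i=1}^n \log \mathbf{E}[e^{\lambda \xi_i}|\mathcal{F}_{i-1}] \;\le\; \sum_{i=1}^n \Big(\mathbf{E}[e^{\lambda \xi_i}|\mathcal{F}_{i-1}] - 1\Big).
\]
This step is meaningful because each factor satisfies $\mathbf{E}[e^{\lambda \xi_i}|\mathcal{F}_{i-1}] \ge 1$ by Jensen's inequality (as already observed in the proof of Lemma \ref{lemma1}), so the logarithms are nonnegative and the linearized bound is the natural one to use.

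Next I would expand the exponential inside the conditional expectation via the Taylor series of $e^x$. Since $(\xi_i,\mathcal{F}_i)$ are martingale differences, $\mathbf{E}[\xi_i|\mathcal{F}_{i-1}]=0$, so the first-order term vanishes and
\[
\mathbf{E}[e^{\lambda \xi_i}|\mathcal{F}_{i-1}] - 1 = \sum_{k=2}^{\infty} \frac{\lambda^k}{k!}\,\mathbf{E}[\xi_i^k|\mathcal{F}_{i-1}].
\]
Applying condition (A1) in the form $|\mathbf{E}[\xi_i^k|\mathcal{F}_{i-1}]| \le \tfrac12 k!\,\epsilon^{k-2}\mathbf{E}[\xi_i^2|\mathcal{F}_{i-1}]$ cancels the $k!$ and leaves a geometric series in $\lambda\epsilon$, which converges precisely because $\lambda\epsilon<1$; summing $\sum_{k=2}^{\infty}(\lambda\epsilon)^{k-2} = (1-\lambda\epsilon)^{-1}$ yields
\[
\mathbf{E}[e^{\lambda \xi_i}|\mathcal{F}_{i-1}] - 1 \;\le\; \frac{\lambda^2}{2(1-\lambda\epsilon)}\,\mathbf{E}[\xi_i^2|\mathcal{F}_{i-1}].
\]

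Finally I would sum over $i$, recognize $\sum_{i=1}^n \mathbf{E}[\xi_i^2|\mathcal{F}_{i-1}] = \langle S\rangle_n$, and invoke condition (A2) in the form $\langle S\rangle_n \le 1+\delta^2$ to obtain the asserted bound $\tfrac{\lambda^2(1+\delta^2)}{2(1-\lambda\epsilon)}$. I do not anticipate a genuine obstacle, since this is essentially the moment computation already performed for the drift term in \eqref{sum29}; the only point requiring a word of justification is the interchange of conditional expectation and the infinite Taylor sum, which is legitimate by the absolute convergence guaranteed by (A1) whenever $0\le\lambda\epsilon<1$.
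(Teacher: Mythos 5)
Your proposal is correct and follows essentially the same route as the paper: the bound $\log(1+t)\le t$, Taylor expansion of $e^{\lambda\xi_i}$ with the first-order term vanishing by the martingale property, condition (A1) reducing the tail to the geometric series $\sum_{k\ge 2}(\lambda\epsilon)^{k-2}=(1-\lambda\epsilon)^{-1}$, and finally (A2) to replace $\langle S\rangle_n$ by $1+\delta^2$. The only cosmetic difference is that you justify the nonnegativity via Jensen and remark on the interchange of expectation and sum, which the paper leaves implicit.
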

\begin{proof}  Since $\log (1+t) \leq t$ for all $t\geq 0$,  we have, for all $0 \leq \lambda < \epsilon^{-1} ,$
\begin{eqnarray*}
\Psi _n(\lambda )  = \sum_{i=1}^{n} \log \Big(1+ \mathbf{E}[e^{\lambda \xi _i}|\mathcal{F}_{i-1}]-1  \Big)
 \leq \sum_{i=1}^{n}  \Big(  \mathbf{E}[e^{\lambda \xi _i}|\mathcal{F}_{i-1}]-1  \Big).
\end{eqnarray*}
By condition (A1), it is easy to see that, for all $0 \leq \lambda < \epsilon^{-1} ,$
\begin{eqnarray*}
 \mathbf{E}[e^{\lambda \xi _i}|\mathcal{F}_{i-1}]-1 &=& \sum_{k=2}^{+\infty}\frac{\lambda^{k}}{k !} \mathbf{E}[\xi_{i}^{k} |\mathcal{F}_{i-1}]  \nonumber\\
&\leq & \frac{\lambda^2}{2}\mathbf{E}[\xi_{i}^{2} |\mathcal{F}_{i-1}]\sum_{k=2}^{\infty}(\lambda\epsilon)^{k-2}  \nonumber\\
 &= &\frac{\lambda^2\mathbf{E}[\xi_{i}^{2} |\mathcal{F}_{i-1}]}{2(1-\lambda\epsilon)}.\nonumber
\end{eqnarray*}
Thus, we obtain, for all $0 \leq \lambda < \epsilon^{-1} ,$
\begin{eqnarray}
\Psi _n(\lambda )   \leq   \frac{\lambda^2\langle S\rangle_n}{2 (1-\lambda\epsilon)}.\nonumber
\end{eqnarray}
This inequality together with condition (A2) gives inequality (\ref{ghnle2}).
\end{proof}

\subsection{Proof of Theorem \ref{th2.2}}
%\ \
%We first prove Theorem  \ref{th2.2}.
%\vspace{0.2cm}
%\noindent\textbf{\emph{Proof of Theorem \ref{th2.2}.}}
For all $ 0\leq \lambda <\epsilon^{-1}$ and $ x \geq0$,  by the definition of the conjugate measure (\ref{f21}), we have the following representation:
\begin{eqnarray*}
\ \ \ \mathbf{P}(S_n>x ) &= & \mathbf{E}_\lambda \left[ Z_n (\lambda)^{-1}\mathbf{1}_{\{S_n>x\}} \right]\\
&= & \mathbf{E}_\lambda \left[ \exp \left\{
-\lambda S_n+\Psi _n(\lambda )\right\}  \mathbf{1}_{\{S_n>x\}}\right] \nonumber\\
&= & \mathbf{E}_\lambda \Big[  \exp \left\{-\lambda x +\Psi _n(\lambda )
 -\lambda ( Y_n(\lambda) + B_{n}(\lambda) - x ) \right\}  \nonumber \\
 && \ \ \ \  \ \ \  \times \mathbf{1}_{\{Y_{n}(\lambda)+B_n(\lambda)-x>0\}}  \Big].\nonumber
\end{eqnarray*}
Setting $ U_n(\lambda)= \lambda (Y_{n}(\lambda)+B_n(\lambda)-x)$, we deduce,  for all $0\leq \lambda < \epsilon^{-1} ,$
\begin{eqnarray}
 \mathbf{P}(S_n>x ) &\leq & \exp \left\{
- \lambda x +\frac{\lambda^2(1+\delta^2)}{2 (1-\lambda\epsilon)} \right\} \mathbf{E}_\lambda \Big[ e^{ - U_n(\lambda)  }   \mathbf{1}_{\{U_n(\lambda)>0\}} \Big] \nonumber\\
&=&  \exp \left\{
- \lambda x +\frac{\lambda^2(1+\delta^2)}{2 (1-\lambda\epsilon)} \right\} \int_0^{\infty}  e^{ -  t  } \mathbf{P}_\lambda(0 < U_n(\lambda) \leq t) dt.\label{f36}
\end{eqnarray}
To optimize the term in the last exponent, let
$\overline{\lambda }=\overline{\lambda }(x) \in [0, \epsilon^{-1})$ be the
unique solution of the equation
\begin{equation}\label{defranda}
-  x + \frac{d}{d \lambda} \Big(\frac{\lambda^2(1+\delta^2)}{2 (1-\lambda\epsilon)} \Big)=0, \ \  \ \mbox{or equivalently} \ \ \  \frac{\lambda-0.5\lambda^2\epsilon}{ (1-\lambda\epsilon)^2} =\frac{x}{1+\delta^2}.
\end{equation}
The definition of $\overline{\lambda }$ and Lemma \ref{lemma1} imply that, for all $  x \geq 0,$
\begin{equation}
  \overline{\lambda }= \frac{2x/(1+ \delta^2)}{ 1+2x\epsilon/(1+ \delta^2)+\sqrt{1+2x\epsilon/(1+ \delta^2)} }. \label{fsf2}
\end{equation}
Taking $\lambda =\overline{\lambda }$  in (\ref{f36}),
we get
\begin{eqnarray} \label{ff32}
\mathbf{P}(S_n>x )
 \leq    \exp \left\{- \frac{\hat{x}^2}{2}  \right\}   \int_0^{\infty}  e^{ -  t  } \mathbf{P}_{\overline{\lambda}}(0 < U_n(\overline{\lambda}) \leq t) dt,
\end{eqnarray}
where
\begin{eqnarray}\label{fineq40}
\hat{x}=\frac{\overline{\lambda}\sqrt{1+\delta^2}}{1-\overline{\lambda}\epsilon}=\frac{2x/\sqrt{1+\delta^2}}{1+\sqrt{1+2x\epsilon/(1+\delta^2)}}.
\end{eqnarray}
To bound the integral term of (\ref{ff32}), we make use of the following lemma, which gives
 a rate of convergence in the central limit theorem for the
process $U( \overline{\lambda}  )$ under the conjugate probability measure $\mathbf{P}_{ \overline{\lambda}  }.$ The proof of this lemma is given in Appendix A.
\begin{lemma}
\label{LEMMA4}
Assume conditions (A1) and (A2). Then, for all $0 \leq \overline{\lambda} < \epsilon^{-1},$
\[
\sup_{u\in \mathbf{R}}\Big| \mathbf{P}_{\overline{\lambda}} (  U_n(\overline{\lambda} )   \leq  \widehat{x}u)-\Phi (  u)\Big| \leq
C\Big( \overline{\lambda}^2 \epsilon + \overline{\lambda} \delta^2 + \epsilon \left| \log  \epsilon
 \right| +   \delta \Big) .
\]
\end{lemma}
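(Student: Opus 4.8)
The plan is to regard $U_n(\overline{\lambda})$ as the sum of a $\mathbf{P}_{\overline{\lambda}}$-martingale and a predictable drift, to establish a central-limit rate for the martingale part, and to absorb the drift as a small, a.s.-bounded perturbation. Using the decomposition $X_k=Y_k(\overline{\lambda})+B_k(\overline{\lambda})$ I would write
\[
U_n(\overline{\lambda})=\overline{\lambda}\,Y_n(\overline{\lambda})+\overline{\lambda}\big(B_n(\overline{\lambda})-x\big),
\]
where $\overline{\lambda}Y_n(\overline{\lambda})=\sum_{i=1}^n\overline{\lambda}\eta_i(\overline{\lambda})$ is a martingale under $\mathbf{P}_{\overline{\lambda}}$ and the second term is $\mathcal{F}_{n}$-predictable. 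The drift is easy to control: the upper bound of Lemma~\ref{lemma1} combined with the defining relation \eqref{defranda} of $\overline{\lambda}$ gives $B_n(\overline{\lambda})\le x$, while its lower bound, compared with the value $x=(1+\delta^2)\tfrac{\overline{\lambda}-0.5\overline{\lambda}^2\epsilon}{(1-\overline{\lambda}\epsilon)^2}$, yields $0\le x-B_n(\overline{\lambda})\le C(\overline{\lambda}\delta^2+\overline{\lambda}^2\epsilon)$ a.s. Since the drift is a.s. bounded and $\widehat{x}$ is of order $\overline{\lambda}$, the sandwiching $\{\overline{\lambda}Y_n+\overline{\lambda}(B_n-x)\le \widehat{x}u\}\subseteq\{\overline{\lambda}Y_n\le \widehat{x}u+r\}$ (with $r\le C(\overline{\lambda}^2\delta^2+\overline{\lambda}^3\epsilon)$), together with the Lipschitz property of $\Phi$ and the martingale estimate below, contributes only $C(\overline{\lambda}\delta^2+\overline{\lambda}^2\epsilon)$, i.e. the first two terms of the asserted bound.

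It then remains to prove that $\sup_u\big|\mathbf{P}_{\overline{\lambda}}(\overline{\lambda}Y_n(\overline{\lambda})\le \widehat{x}u)-\Phi(u)\big|\le C(\epsilon|\log\epsilon|+\delta)$, which I would obtain from a Berry--Esseen bound for martingales in the spirit of Bolthausen (1982) and Grama--Haeusler (2000). Two ingredients are required. The first is a variance identification: the conjugate conditional variance equals $\overline{\lambda}^2\sum_i\mathbf{E}_{\overline{\lambda}}[\eta_i^2\mid\mathcal{F}_{i-1}]=\overline{\lambda}^2\Psi_n''(\overline{\lambda})$, and expanding $\mathbf{E}[\xi_i^2e^{\overline{\lambda}\xi_i}\mid\mathcal{F}_{i-1}]$ and $\mathbf{E}[e^{\overline{\lambda}\xi_i}\mid\mathcal{F}_{i-1}]$ via (A1) and summing with (A2) shows this quantity matches $\widehat{x}^2$ up to a relative error governed by a deterministic tilting correction of order $\overline{\lambda}\epsilon$ and by the a.s. fluctuation $\delta^2$ of $\langle S\rangle_n$. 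One checks that $\overline{\lambda}\epsilon\le\overline{\lambda}^2\epsilon+C\epsilon|\log\epsilon|$ for all $\overline{\lambda}\in[0,\epsilon^{-1})$, so the deterministic part is absorbed by the stated terms, while the $\delta^2$ fluctuation enters the martingale central-limit estimate as a term of order $\delta$.

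The second ingredient is the control of the increments $\overline{\lambda}\eta_i(\overline{\lambda})$. These are not bounded a.s., but (A1), inherited by $\mathbf{P}_{\overline{\lambda}}$, supplies exponential moments; truncating each increment at level $\asymp\epsilon|\log\epsilon|$ discards only a summably small mass (of order $\epsilon$), after which the truncated martingale has increments bounded by $C\epsilon|\log\epsilon|$ and the Bolthausen-type argument produces the rate $\epsilon|\log\epsilon|$. Assembling the drift estimate and the two ingredients gives the four terms $\overline{\lambda}^2\epsilon+\overline{\lambda}\delta^2+\epsilon|\log\epsilon|+\delta$.

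The main obstacle is the bounded-increment martingale Berry--Esseen estimate with its sharp logarithmic factor: this is where the characteristic $|\log\epsilon|$ arises (cf. the sharpness assertion \eqref{optim001}), and it demands either a smoothing inequality coupled with a recursive estimate of the conditional characteristic function of $Y_n(\overline{\lambda})$ under $\mathbf{P}_{\overline{\lambda}}$, or a telescoping Lindeberg comparison with the normal law, carried out while simultaneously tracking the $\epsilon$-, $\delta$- and $\overline{\lambda}$-dependence of the conjugate conditional moments. A secondary difficulty is to ensure the truncation does not spoil the conditional-variance identification, which is precisely why the truncation level is tied to $\epsilon|\log\epsilon|$ rather than to $\epsilon$.
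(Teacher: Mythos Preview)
Your approach is essentially the same as the paper's: decompose $U_n(\overline{\lambda})=\overline{\lambda}Y_n(\overline{\lambda})+\overline{\lambda}(B_n(\overline{\lambda})-x)$, control the drift via Lemma~\ref{lemma1} and the defining relation for $\overline{\lambda}$ to get the $\overline{\lambda}^2\epsilon+\overline{\lambda}\delta^2$ contribution, and then invoke a Berry--Esseen rate for the conjugate martingale $Y_n(\overline{\lambda})$ under $\mathbf{P}_{\overline{\lambda}}$. The only difference is packaging: the paper does not reprove the martingale CLT rate but quotes it as Lemma~\ref{lem3} (i.e.\ Lemma~3.1 of \cite{FGL13}), which gives $\sup_u|\mathbf{P}_\lambda(Y_n(\lambda)\le u)-\Phi(u)|\le C(\lambda\epsilon+\epsilon|\log\epsilon|+\delta)$, and then absorbs $\overline{\lambda}\epsilon$ into $\overline{\lambda}^2\epsilon+\epsilon|\log\epsilon|$ exactly as you indicate; your truncation-at-$\epsilon|\log\epsilon|$ plus Bolthausen/Grama--Haeusler sketch is precisely the mechanism behind that cited lemma.
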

Now we return to the proof of Theorem \ref{th2.2}.  From (\ref{ff32}), using Lemma {\ref{LEMMA4}},
it follows that, with $\mathcal{N}(0,1)$ the standard normal random variable,
\begin{eqnarray}
\int_0^{\infty}  e^{ -  t  } \mathbf{P}_{\overline{\lambda}}(0 < U_n(\overline{\lambda}) \leq t) dt
&=&   \int_0^{\infty}  \exp\{ - \widehat{x}  t  \}\ \mathbf{P}_{\overline{\lambda} }
(0 < U_n(\overline{\lambda})\leq \widehat{x}t) \widehat{x} dt \nonumber \\
&\leq&   \int_0^{\infty}  \exp\{ - \widehat{x}   t  \}\ \mathbf{P} _{\overline{\lambda}}
(0 < \mathcal{N}(0,1) \leq t) \widehat{x}dt \nonumber \\
& &  + \, C\, \Big( \overline{\lambda}^2 \epsilon + \overline{\lambda} \delta^2 + \epsilon \left| \log  \epsilon
 \right| +   \delta \Big) \nonumber \\
&\leq&  \exp\left\{ \frac { \widehat{x}^2} { 2}     \right\} \Big(1 - \Phi(\widehat{x}) \Big)  \label{sgbs}\\
& &  + \, C\, \Big( \overline{\lambda}^2 \epsilon + \overline{\lambda} \delta^2 + \epsilon \left| \log  \epsilon
 \right| +   \delta  \Big). \nonumber
\end{eqnarray}
Combining (\ref{ff32}) and (\ref{sgbs}) together,  we have, for all $ x\geq 0 ,$
\begin{eqnarray*}
 \mathbf{P}(S_n>x ) \, \leq \,  1- \Phi\left( \widehat{x}\right)
  +  C\, \exp \left\{- \frac{\widehat{x}^2}{2 }  \right\}  \Big(\overline{\lambda}^2 \epsilon + \overline{\lambda} \delta^2 + \epsilon \left| \log  \epsilon
 \right| +   \delta  \Big)  .
\end{eqnarray*}
Using the two sided bound (\ref{mc}),
we obtain, for all $ x\geq 0 ,$
\begin{eqnarray*}
 \mathbf{P}(S_n>x ) \, \leq \, \Big( 1- \Phi\left(\widehat{x}\right)\Big)  \Big[ 1+ C \,\Big(1+\widehat{x} \Big)  \Big(\overline{\lambda}^2 \epsilon + \overline{\lambda} \delta^2 + \epsilon \left| \log  \epsilon
 \right| +   \delta \Big ) \Big].\label{fineth1}
\end{eqnarray*}
 This completes the proof of Theorem \ref{th2.2}.\hfill\qed

\subsection{Proof of Theorem  \ref{th2.1}}\label{sec3.2}
We firstly prove that (\ref{fgs}) holds for all $x\geq0$.
From (\ref{fineth1}) and the fact $\mathbf{P}(S_n>x)=1- \mathbf{P}(S_n \leq x)$, it follows that, for all $x\geq0$,
\begin{eqnarray}
 \Phi\left( x \right)-   \mathbf{P}(S_n \leq x )  &=&\mathbf{P}(S_n>x )- \Big( 1- \Phi\left( x \right) \Big)\nonumber\\
&=&\Phi(x)- \Phi\left( \widehat{x} \right) + \mathbf{P}(S_n>x )- \Big( 1- \Phi\left( \widehat{x} \right) \Big)  \nonumber\\
   &\leq& \Phi(x)- \Phi\left( \widehat{x} \right) \nonumber\\
    && + C_1\Big( 1- \Phi\left( \widehat{x} \right) \Big) \Big(1+ \widehat{x} \Big)  \Big(   \overline{\lambda}^2 \epsilon + \overline{\lambda} \delta^2  + \epsilon |\log \epsilon |  + \delta  \Big)   \nonumber\\
    &\leq& \frac{1}{\sqrt{2 \pi}} |x-\widehat{x}|\exp\left\{ -\frac{\widehat{x}^2 }{2} \right\} \label{fineq56}\\
    && + C_2 \,  \Big( x^2 \epsilon + x \delta^2  + \epsilon |\log \epsilon |  + \delta  \Big)\exp\left\{ -\frac{\widehat{x}^2 }{2} \right\}, \nonumber
\end{eqnarray}
where the last line follows from $\overline{\lambda} \leq   x$.
From the definitions of $\overline{\lambda}$  and $\widehat{x}$ (cf. (\ref{defranda}) and  (\ref{fineq40})), we deduce that
\begin{eqnarray}
   |x-\widehat{x}| &=& \frac{\overline{\lambda}(1-0.5\overline{\lambda}\epsilon)(1+\delta^2)}{ (1-\overline{\lambda}\epsilon)^2}- \frac{\overline{\lambda} \sqrt{ 1+\delta^2 }}{ 1-\overline{\lambda}\epsilon }  \nonumber\\
    &=& \frac{\overline{\lambda}( 1+ \delta^2 -   \sqrt{ 1+\delta^2 } \,) }{ (1-\overline{\lambda}\epsilon)^2} +\frac{\overline{\lambda}^2 \epsilon \,\big( \sqrt{ 1+\delta^2 } - 0.5(1+\delta^2) \big) }{ (1-\overline{\lambda}\epsilon)^2} \nonumber\\
     &\leq & \frac{\overline{\lambda}\delta^2  }{ (1-\overline{\lambda}\epsilon)^2} +\frac{\overline{\lambda}^2 \epsilon  }{ (1-\overline{\lambda}\epsilon)^2} \nonumber \\
    &\leq& C\, x\, \Big(\delta^2 +  x\epsilon \Big) .\label{fineq57}
\end{eqnarray}
Combining (\ref{fineq56}) and (\ref{fineq57}) together, we have, for all $x \geq 0$,
\begin{eqnarray}
 \ \ \ \ \ \ \ \ \ \   \Phi\left( x \right)  -  \mathbf{P}(S_n \leq x ) \leq
    C \, \Big(1+x^2 \Big) \bigg( \epsilon |\log \epsilon |  + \frac{\delta}{1+x }  \bigg)\exp\left\{ -\frac{\widehat{x}^2 }{2} \right\}.\label{fupbounds}
\end{eqnarray}
To prove the lower bound of $ \Phi\left( x \right) - \mathbf{P}(S_n \leq x ) $, we shall use the following Cram\'{e}r large deviation expansion: for all $0\leq x \leq  \min\{\epsilon^{-1/3}, \delta^{-1}\},$
\begin{equation}
\frac{\mathbf{P}(S_n>x)}{1-\Phi \left( x\right)}= 1 + \theta C\,\bigg(  \Big(1+x \Big)\Big(\epsilon \left| \log  \epsilon
 \right| + \delta \Big)+ x^3\epsilon \bigg),
\end{equation}
where $|\theta|\leq 1$.  This Cram\'{e}r large deviation expansion is a simple consequence of Corollary 2.1 of \cite{FGL13}. Using the equality  $\mathbf{P}(S_n>x)=1- \mathbf{P}(S_n \leq x)$ and the two-sides bound (\ref{mc}), we get, for all $0\leq x \leq  \min\{\epsilon^{-1/3}, \delta^{-1}\},$
\begin{eqnarray}
 \Phi\left( x \right)-  \mathbf{P}(S_n \leq x )   &\geq& - C_1\, \Big(1-\Phi \left( x\right) \Big) \bigg(  \Big(1+x \Big)\Big(\epsilon \left| \log  \epsilon
 \right| + \delta \Big)+ x^3\epsilon \bigg)  \nonumber\\
 &\geq& - C_2\,\Big( \epsilon \left| \log  \epsilon
 \right| + \delta  + x^2\epsilon \Big) \exp\left\{ -\frac{x ^2 }{2} \right\} \nonumber\\
 &\geq& - C_2\, \Big(1+x^2 \Big)\Big( \epsilon \left| \log  \epsilon
 \right| + \frac{\delta}{1+x }   \Big) \exp\left\{ -\frac{ x  ^2 }{2} \right\}. \label{f34sd1}
\end{eqnarray}
For all $ x \geq  \min\{\epsilon^{-1/3}, \delta^{-1}\}$, we have $\frac{1}{1+ \widehat{x}} \leq C\, ( x^2 \epsilon+ x\delta).$ Then, from (\ref{f9}), it is easy to see that, for all $ x \geq  \min\{\epsilon^{-1/3}, \delta^{-1}\},$
\begin{eqnarray}
  \mathbf{P}(S_n>x )  \leq   C \,  \Big(1+x^2 \Big) \Big( \epsilon \left| \log  \epsilon
 \right| + \frac{\delta}{1+x }   \Big)\exp\left\{ -\frac{\widehat{ x }^2 }{2} \right\}. \label{f34sd2}
\end{eqnarray}
Using (\ref{mc}) and the bound $ x \geq  \min\{\epsilon^{-1/3}, \delta^{-1}\}$, the same upper bound is obtained for $1-\Phi(x)$.
Therefore, we have, for all $ x \geq  \min\{\epsilon^{-1/3}, \delta^{-1}\}$,
\begin{eqnarray}\label{ft46}
&& \Phi\left( x \right) -  \mathbf{P}(S_n\leq x )   \, \geq \,  - C \,    \Big(1+x^2 \Big)\Big(  \epsilon |\log \epsilon |  + \frac{\delta}{1+x } \Big)\exp\left\{ -\frac{\widehat{ x }^2 }{2} \right\}.
\end{eqnarray}
Combining  (\ref{fupbounds}), (\ref{f34sd1}) and (\ref{ft46}) together, we obtain, for all $ x \geq 0$,
\begin{eqnarray*}
&& \left|\frac{}{}   \mathbf{P}(S_n\leq x ) - \Phi\left( x \right)\right|  \,  \leq\, C \,\Big(1+x^2 \Big)\Big( \epsilon |\log \epsilon |  + \frac{\delta}{1+x } \Big)\exp\left\{ -\frac{\widehat{ x }^2 }{2} \right\}.
\end{eqnarray*}
Notice that the same argument is applied to $-S_n$. Thus, for all $x<0$,
\begin{eqnarray}
\left|\frac{}{}   \mathbf{P}(S_n\leq x ) - \Phi\left( x \right)\right| &=& \left|\frac{}{}   \mathbf{P}( -S_n\geq -x ) - \Phi\left( x \right)\right| \nonumber\\
&=& \left|\frac{}{}   \mathbf{P}( -S_n\geq -x )-1 - \Big(\Phi\left( x \right)-1 \Big)\right| \nonumber\\
&=& \left|\frac{}{}   \mathbf{P}( -S_n <  -x )+\Phi\left( x \right)-1  \right| \nonumber\\
&=& \left|\frac{}{}   \mathbf{P}( -S_n < -x )  -  \Phi\left( -x \right) \right| \nonumber\\
&\leq&C \, \Big(1+x^2 \Big) \Big( \epsilon |\log \epsilon |  + \frac{\delta}{1+ |x| }  \Big)\exp\left\{ -\frac{\widehat{  x  }^2 }{2} \right\}.
\end{eqnarray}
This completes the proof of Theorem  \ref{th2.1}. \hfill\qed

\vspace{0.3cm}
\noindent\emph{\textbf{Proof of Corollary \ref{co2.1}.}}
Following Bolthausen (1982) we consider the stopping time
$\tau=\sup\{0\leq k\leq n:\ \left\langle S\right\rangle_k \leq 1\}$.
Let $r= \lfloor (1- \left\langle S\right\rangle_\tau)/\epsilon^2 \rfloor$, where $\lfloor x\rfloor$ is the largest integer less than $x$.
Then $r\leq \lfloor 1/\epsilon^2 \rfloor $. Let  $N=n+\lfloor 1/\epsilon^2 \rfloor +1.$
Consider a sequence of independent Rademacher random variables $(\eta_i)$ (taking values $+1$ and $-1$ with equal probabilities)
which is also independent of the martingale differences $(\xi_i)$.
For each $i=1,\dots,N$ define $\xi'_i =\xi_i$ if $i \leq \tau$, $\xi'_i =\varepsilon \eta_i$ if $\tau < i \leq \tau+r,$
$\xi'_{i}=(1-\left\langle S\right\rangle_\tau -r \epsilon^2 )^{1/2}$  if $i=\tau +r +1$, and $\xi'_i=0$ if $\tau +r +1 < i \leq N.$
Clearly, $S'_k=\sum_{i=1}^k \xi'_{i}$, $k=0,\dots,N$ (with $S'_0=0$) is a martingale sequence w.r.t.~the enlarged probability space and the enlarged filtration.
Moreover $\left\langle S'\right\rangle_{N}=1$  and condition (A1) is satisfied for $(S'_k)_{k=1,\dots,N}.$
By Theorem  \ref{th2.1}, it holds, for all $x \in \mathbf{R}$,
\begin{eqnarray}
  \left|\frac{}{} \mathbf{P}(S'_N  \leq x )-  \Phi\left( x\right) \right|   \leq   C\,\left(\frac{}{}\! 1+ x^2 \right)\epsilon|\log \epsilon|   \exp\left\{-\frac{\widehat{x}^2}{2}\right\} ,
\end{eqnarray}
where $\widehat{x}   = \frac{2|x| }{1+\sqrt{1+2|x|\epsilon }}.$  Since $\mathbf{E}[\xi_i^2 | \mathcal{F}_{i-1}] \leq 12 \epsilon^2$ for all $i$ (cf.\ the proof of Lemma 4.1 in \cite{FGL13}), it holds $$ 1- 12 \epsilon^2 \leq 1-\mathbf{E}[\xi_\tau^2 | \mathcal{F}_{\tau-1}] \leq \langle S\rangle_\tau \leq 1.$$ Then it is easy to see that
 \begin{eqnarray}
\ \ \ \ \ \  \mathbf{E}[(S'_N -S_n )^2]  \leq C \, \Big(\mathbf{E} |\! \left\langle S\right\rangle_n -1| + \epsilon^2 \Big);
 \end{eqnarray}
cf.\ Mourrat \cite{M13} for more details.   For all  $x\geq 0$ and  any $t>0,$ it holds
 \begin{eqnarray}
   \mathbf{P}(S_n \leq x ) &\leq&  \frac{}{} \mathbf{P}(S_n \leq x,\ |S_n-S'_N | \leq t  ) +  \mathbf{P}(|S_n-S'_N| > t   )  \label{chi78}\\
    &\leq&\ \mathbf{P}(S'_N \leq x+ t  ) + \frac{1}{t^2} \mathbf{E} [ |S_n-S'_N|^2 ]  \label{chineqd}\\
    &\leq&\ \Phi\left( x+t\right)  + C_1 \left(\frac{}{}\! 1+ x^2 \right)\epsilon|\log \epsilon|   \exp\left\{-\frac{\widehat{x}^2}{2}\right\}
    \nonumber \\ &&\ \ \ \ \ \   + \ \frac{ C_2}{t^2} \Big(\mathbf{E}|\! \left\langle S\right\rangle_n -1| + \epsilon^2 \Big)  \nonumber \\
     &\leq&\ \Phi\left( x \right) +\frac{t}{\sqrt{2\pi}}e^{- x^2/2} \nonumber \\
      && \ \ \ \ \ \  + \  C_1 \left(\frac{}{}\! 1+ x^2 \right)\epsilon|\log \epsilon|   \exp\left\{-\frac{\widehat{x}^2}{2}\right\}  + \frac{ C_2}{t^2}
      \Big(\mathbf{E} |\! \left\langle S\right\rangle_n -1| + \epsilon^2 \Big),  \nonumber
\end{eqnarray}
where $\widehat{x}$ is defined by (\ref{tgn35}).
Now putting $t=(\mathbf{E} |\! \left\langle S\right\rangle_n -1| + \epsilon^2) e^{\, x^2\!/\,6},$ one has, for all $x\geq 0, $
\begin{eqnarray*}
\mathbf{P}(S_n  \leq x )-  \Phi\left( x\right)   & \leq &
C  \bigg( \! \left(  1+ x^2 \right)\epsilon|\log \epsilon| \exp\left\{-\frac{\widehat{x}^2}{2}\right\} \\
 && \ \ \ \ \ \  \ \ \ \    + \ \Big(\mathbf{E} |\! \left\langle S\right\rangle_n -1| + \epsilon^2 \Big)^{1/3}  \exp\left\{-\frac{x^2}{6}\right\} \bigg).
\end{eqnarray*}
It is easy to see that the same bound holds for $ \Phi\left( x\right) -  \mathbf{P}(S_n  \leq x ). $
Therefore we obtain (\ref{thonineqs})
for all $x\geq 0.$ If $x\leq 0, $ we consider $-S_n$ instead of $S_n$ and we use the fact that
$ \mathbf{P}(-S_n \leq -x )= \mathbf{P}(S_n \geq x )= 1- \mathbf{P}(S < x )$.
This implies (\ref{thonineqs}) for all $x \leq 0,$ which completes the proof of
Corollary \ref{co2.1}. \hfill\qed

\section{Proofs of Theorems \ref{th3.1} and \ref{th3.3}}\label{sec5}
The proofs of Theorems \ref{th3.1} and \ref{th3.3} are based on Theorem \ref{th2.1}.

\noindent \emph{\textbf{Proof of Theorem \ref{th3.1}.}} From (\ref{ine29}) and (\ref{ine30}), it is easy to see that
\begin{equation}
\theta_n -\theta = \frac{\sum_{k=1}^n \phi_{k} \varepsilon_k}{\sum_{k=1}^n \phi_{k}^2}. \nonumber
\end{equation}
For any $i=1,...,n$, set
\begin{eqnarray*}
 \xi_i= \frac{ \phi_{i} \varepsilon_i}{ \sigma \sqrt{\sum_{k=1}^n \phi_{k}^2}}\ \  \textrm{and} \ \ \mathcal{F}_{i}' = \sigma \big( \phi_{k}, \varepsilon_k, 1\leq k\leq i,\  \phi_{k}^2, 1\leq k\leq n \big).
\end{eqnarray*}
Then $(\xi_i, \mathcal{F}_{i}')_{i=1,...,n}$ is a sequence of martingale differences which satisfies
$$ \frac{(\theta_n -\theta)\sqrt{\sum_{k=1}^n \phi_{k}^2} } { \sigma}  =\sum_{i=1}^n\xi_i.$$
Notice that$$\langle S\rangle_n = \sum_{i=1}^{n} \frac{ \phi_{i}^2 }{ \sigma^2 (\sum_{k=1}^n \phi_{k}^2)} \mathbf{E}[\varepsilon_i^2 | \mathcal{F}_{i-1} ] =\sum_{i=1}^{n} \frac{ \phi_{i}^2 }{  \sum_{k=1}^n \phi_{k}^2 }  = 1 $$
and
\begin{eqnarray} \label{fgdsasas}
\Big|\mathbf{E}[\xi_i^k | \mathcal{F}_{i-1}' ]\Big| &=&  \frac{ \phi_{i}^2 }{ \sigma^k (\sum_{k=1}^n \phi_{k}^2)} \Bigg|\mathbf{E}\bigg[ \Big(\frac{ \phi_{i}  }{  \sqrt{\sum_{k=1}^n \phi_{k}^2}}\Big)^{k-2} \varepsilon_i^k  \bigg| \mathcal{F}_{i-1}' \bigg]\Bigg| \nonumber\\
&\leq&  \frac{ \phi_{i}^2 \epsilon_1^{k-2} }{ \sigma^k (\sum_{k=1}^n \phi_{k}^2)} \Big|\mathbf{E} \big[   \varepsilon_i^k  \big| \mathcal{F}_{i-1}  \big]\Big| \nonumber\\
&\leq& \frac12 k! \, \epsilon_2^{k-2} \frac{ \phi_{i}^2 \epsilon_1^{k-2} }{ \sigma^{k-2}(\sum_{k=1}^n \phi_{k}^2)} \nonumber\\
&=&  \frac12 k! \, \epsilon ^{k-2} \mathbf{E}[\xi_i^2 | \mathcal{F}_{i-1} ']. \nonumber
 \end{eqnarray}
Applying Theorem
\ref{th2.1} to $(\xi_i, \mathcal{F}_{i}')_{i=1,...,n}$, we obtain the requiblack inequalities. \hfill\qed

\noindent \emph{\textbf{Proof of Theorem \ref{th3.3}.}}
Let $\mathcal{F}_{0} = \sigma \Big( \xi_j^2, 1\leq j\leq n \Big),$ and set, for all $i=1,...,n$,
\begin{eqnarray}\label{dun87}
\eta_i= \frac{ \xi_i}{ \sqrt{[S]_n}}  \ \  \textrm{and}  \ \ \mathcal{F}_{i} = \sigma \Big( \xi_{k},  1\leq k\leq i,\  \xi_j^2, 1\leq j\leq n \Big).
\end{eqnarray}
Since  $(\xi _i)_{i=1,...,n}$ is a  sequence of independent and  symmetric random variables, we deduce that  $$ \mathbf{E}[\xi_i> y|\, \mathcal{F}_{i-1} ] = \mathbf{E}[\xi_i> y|\, \xi_i^2 ] =\mathbf{E}[ -\xi_i > y|\, (-\xi_i)^2 ]=\mathbf{E}[ -\xi_i > y|\, \mathcal{F}_{i-1}  ].$$
Thus $\big(\eta_i,\mathcal{F}_i\big)_{i=1,...,n}$ is a sequence of conditionally symmetric martingale differences, i.e.\ $ \mathbf{E}[\eta_i> y|\, \mathcal{F}_{i-1} ] =  \mathbf{E}[ -\eta_i > y|\, \mathcal{F}_{i-1}  ],$ and satisfies $\mathbf{P}(|\eta_i| \leq \epsilon | \mathcal{F}_{0} )=1$ by assumption.
Note that $S_n / \sqrt{[S]_n} = \sum_{i=1}^n \eta_i
$
and  $\sum_{i=1}^n \mathbf{E}[\eta_i^2| \mathcal{F}_{i-1} ]\  = \ \sum_{i=1}^n\eta_i^2 \ =\ 1.$
By the fact that $\eta_i$ is   conditionally symmetric w.r.t.\ $\mathcal{F}_{i-1}$,   we have, for all $1\leq i\leq n$ and all $\lambda\geq0$,
\begin{eqnarray*}
 \mathbf{E}\left[\exp\left\{\lambda \eta_i  \right\} \Big|  \mathcal{F}_{i-1} \right]  = \frac12 \ \mathbf{E}\left[\exp\left\{  \lambda \eta_i \right\} + \exp\left\{  -\lambda \eta_i \right\} \Big|  \mathcal{F}_{i-1}  \right].
\end{eqnarray*}
Using the inequality $\frac12(e^t+e^{-t}) \leq e^{t^2/2}$, we obtain, for all $\lambda\geq0$,
\begin{eqnarray*}
 \mathbf{E}\left[\exp\left\{ \lambda \eta_i \right\} \Big| \mathcal{F}_{i-1} \right]
  \leq  \exp\left\{\frac{\lambda^2 \eta_i^2}{2  } \right\}.
\end{eqnarray*}
Thus it holds, for all $1\leq k \leq n,$
\begin{equation}
\Psi _k(\lambda ) \ \leq \ \frac{\lambda^2}{2}\sum_{i=1}^k \ \eta_i^2 \  \leq \ \frac{\lambda^2}{2},
\end{equation}
which improves Lemma \ref{lemma2}. With this improvement and a proof similar to that of Theorem
\ref{th2.1}, we obtain the requiblack inequality. \hfill\qed

\appendix \label{assec}

\section{Proof of  Lemma \ref{LEMMA4}}\label{sec6}
 To complete the proof of Theorems \ref{th2.2}, we need to prove Lemma \ref{LEMMA4}.
 We will make use of the following assertion   which gives
 a rate of convergence in the central limit theorem for the
martingale $Y( \lambda  )$ under the conjugate probability measure $\mathbf{P}_{ \lambda  }.$
\begin{lemma}\label{lem3}
Assume conditions (A1) and (A2). Then, for all $0 \leq \lambda < \epsilon^{-1},$
\[
\sup_{u\in \mathbf{R}}\Big| \mathbf{P}_\lambda (Y_n(\lambda )\leq u)-\Phi (u)\Big| \ \leq \
C \Big( \lambda \epsilon +\epsilon \left| \log  \epsilon
 \right| +   \delta \Big) .
\]
\end{lemma}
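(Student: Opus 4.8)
The plan is to view $Y(\lambda)=(Y_k(\lambda),\mathcal{F}_k)_{k=0,\dots,n}$ as a martingale \emph{under the conjugate measure} $\mathbf{P}_\lambda$, with differences $\eta_i(\lambda)=\xi_i-b_i(\lambda)$, and to apply to it a Berry-Esseen bound for martingales with small differences. The three error terms should then be traced to distinct sources: the factor $\epsilon|\log\epsilon|$ is the intrinsic convergence rate coming from the smallness of the differences (the logarithmic factor being exactly the one already present in Bolthausen (1982) and Grama (1987a,b)); the term $\delta$ reflects the a.s.\ control $|\langle S\rangle_n-1|\le\delta^2$ of condition (A2); and the term $\lambda\epsilon$ measures the bias that the exponential tilt introduces into the conjugate quadratic characteristic.

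First I would record the tilting identity
\[
\mathbf{E}_\lambda[f(\xi_i)\mid\mathcal{F}_{i-1}]=\frac{\mathbf{E}[f(\xi_i)e^{\lambda\xi_i}\mid\mathcal{F}_{i-1}]}{\mathbf{E}[e^{\lambda\xi_i}\mid\mathcal{F}_{i-1}]},
\]
and, using condition (A1) together with $\mathbf{E}[e^{\lambda\xi_i}\mid\mathcal{F}_{i-1}]\ge1$, show that the differences $\eta_i(\lambda)$ inherit, under $\mathbf{P}_\lambda$, a conditional Bernstein condition with scale parameter of order $\epsilon$, uniformly for every $0\le\lambda<\epsilon^{-1}$. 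Since (A1) with $k=4$ and Jensen's inequality already force $\mathbf{E}[\xi_i^2\mid\mathcal{F}_{i-1}]\le12\epsilon^2$ a.s., the conjugate differences are effectively bounded at scale $\epsilon$ (sub-exponential with conditional variances of order $\epsilon^2$), which is precisely the input required by a martingale Berry-Esseen theorem of bounded-difference type.

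Next I would control the conjugate quadratic characteristic
\[
\langle Y(\lambda)\rangle_n=\sum_{i=1}^n\mathbf{E}_\lambda[\eta_i(\lambda)^2\mid\mathcal{F}_{i-1}]=\sum_{i=1}^n\Big(\mathbf{E}_\lambda[\xi_i^2\mid\mathcal{F}_{i-1}]-b_i(\lambda)^2\Big).
\]
Expanding the tilted second moment with (A1) gives $\mathbf{E}_\lambda[\xi_i^2\mid\mathcal{F}_{i-1}]=\mathbf{E}[\xi_i^2\mid\mathcal{F}_{i-1}]\,(1+O(\lambda\epsilon))$ (the first-order correction coming from the conditional third moment), while $\sum_i b_i(\lambda)^2=O((\lambda\epsilon)^2)$; combined with (A2) this yields the a.s.\ bound $|\langle Y(\lambda)\rangle_n-1|\le C(\lambda\epsilon+\delta^2)$. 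Crucially, I would keep the predictable first-order tilt bias (of size $\lambda\epsilon$) separate from the genuinely random residual controlled by $\delta^2$: the former should be absorbed by a direct Gaussian comparison, $\sup_u|\Phi(u/\sqrt{1+\alpha})-\Phi(u)|\le C|\alpha|$, contributing $\lambda\epsilon$ \emph{linearly}, whereas the latter enters through the square root of its a.s.\ bound, producing the $\delta$ contribution. With these estimates in hand, invoking a martingale Berry-Esseen bound of Bolthausen--Grama--Haeusler type (as in Fan et al.\ (2013)) and summing the contributions yields the asserted rate $C(\lambda\epsilon+\epsilon|\log\epsilon|+\delta)$.

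The hard part will be twofold. The real content lies in producing the sharp logarithmic factor $\epsilon|\log\epsilon|$: a crude Esseen smoothing would yield only a weaker power of $\epsilon$, and one must instead combine truncation at level $\sim\epsilon|\log\epsilon|$ with Bolthausen's iterated-conditioning argument to recover the log. Equally delicate is that the tilt bias must be handled linearly rather than through a square root, since letting $\lambda\epsilon$ pass through $\sqrt{\phantom{x}}$ would overshoot the claimed bound once $\lambda$ is large; moreover everything must be uniform in $\lambda\in[0,\epsilon^{-1})$, where the tilt degenerates as $\lambda\epsilon\to1$. Fortunately, since $\lambda\epsilon<1$, in that degenerate regime the right-hand side is already of order one and the inequality is trivial, so the delicate analysis is needed only where $\lambda\epsilon$ is small, and there the tilted moment bounds of the first step remain uniformly valid.
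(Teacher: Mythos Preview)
The paper does not actually prove this lemma: it simply quotes it as Lemma~3.1 of Fan, Grama and Liu (2013) (with Grama--Haeusler (2000) as the bounded-difference precursor). Your outline is precisely the strategy carried out in that reference: show that under $\mathbf{P}_\lambda$ the differences $\eta_i(\lambda)$ satisfy a conditional Bernstein condition with the same scale $\epsilon$, prove the a.s.\ bound $|\langle Y(\lambda)\rangle_n-1|\le C(\lambda\epsilon+\delta^2)$ on the conjugate quadratic characteristic, and then feed this into a Bolthausen--Grama type martingale Berry--Esseen argument producing the $\epsilon|\log\epsilon|$ factor.

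Your diagnosis of the one genuine subtlety is accurate. A naive application of the Berry--Esseen bound of the form $C(\epsilon|\log\epsilon|+\delta')$ with $(\delta')^2$ the a.s.\ bound on $|\langle Y(\lambda)\rangle_n-1|$ would give $\sqrt{\lambda\epsilon}$ rather than $\lambda\epsilon$, which is too weak. In the cited proof this is avoided not quite by the Gaussian rescaling $\Phi(u/\sqrt{1+\alpha})$ you propose (the ``bias'' $\alpha$ is itself random, being a weighted sum of the $\mathbf{E}[\xi_i^2\mid\mathcal{F}_{i-1}]$), but rather by running the Lindeberg-type comparison directly under $\mathbf{P}_\lambda$: the $\lambda\epsilon$ then emerges linearly from the difference between the tilted and untilted conditional second and third moments, while the $\delta$ still comes from $\sqrt{\delta^2}$ via (A2). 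Your rescaling idea can be made to work too, but one has to condition on the predictable part rather than treat it as a constant. Otherwise your plan is sound and matches the reference the paper invokes.
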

This assertion is proved in  \cite{FGL13}, Lemma 3.1 (for an earlier result for
bounded martingale differences see Lemma 3.3 of  \cite{GH00}).  Next we use  Lemma \ref{lem3} to prove Lemma \ref{LEMMA4}.

\vspace{0.2cm}

\noindent\emph{\textbf{Proof of  Lemma \ref{LEMMA4}.}}
Notice that $| B_n(\overline{\lambda})-x | \leq C  \overline{\lambda}^2  \epsilon  +  \overline{\lambda} \delta^2 .$ Thus $$\sup_{u\in \mathbf{R}}\Big| \Phi \Big( \frac{\widehat{x}u  - \overline{\lambda} ( B_n(\overline{\lambda})-x   )}{\overline{\lambda} } \Big)  -\Phi ( u)\Big| \ \leq \
C \Big(  \overline{\lambda}^2  \epsilon  +  \overline{\lambda} \delta^2 \Big) .$$ By Lemma \ref{lem3}, it is easy to see that
\begin{eqnarray}
&&\sup_{u\in \mathbf{R}}\Big| \mathbf{P}_{\overline{\lambda}} (  U_n(\overline{\lambda} )   \leq  \widehat{x}u)-\Phi (  u)\Big|  \\
 &\leq &  \sup_{u\in \mathbf{R}}\Big| \mathbf{P}_{\overline{\lambda}} \Big ( Y_n(\overline{\lambda} )  \leq   \frac{\widehat{x}u  - \overline{\lambda} ( B_n(\overline{\lambda})-x   )}{\overline{\lambda} } \Big)-\Phi \Big(  \frac{\widehat{x}u  - \overline{\lambda} ( B_n(\overline{\lambda})-x   )}{\overline{\lambda} } \Big)\Big|  \nonumber \\
& &  + \ \sup_{u\in \mathbf{R}}\Big| \Phi \Big( \frac{\widehat{x}u  - \overline{\lambda} ( B_n(\overline{\lambda})-x   )}{\overline{\lambda} } \Big)  -\Phi ( u)\Big|   \nonumber  \\
&\leq &  C_1 \Big( \overline{\lambda} \epsilon +\epsilon \left| \log  \epsilon
 \right| +   \delta \Big) \, + \, C_2\Big(  \overline{\lambda}^2  \epsilon  +  \overline{\lambda} \delta^2 \Big)  \nonumber  \\
 &\leq &  C  \Big( \overline{\lambda}^2 \epsilon + \overline{\lambda} \delta^2 + \epsilon \left| \log  \epsilon
 \right| +   \delta \Big).   \nonumber
\end{eqnarray}
This completes the proof. \hfill\qed

\section*{Acknowledgements}

\textcolor{black}{The authors are grateful to the reviewers for their comments and remarks which helped to improve the manuscript.}

\end{document}